\newtheorem{theorem}{Theorem}[section]
\newtheorem{corollary}[theorem]{Corollary}
\newtheorem{lemma}[theorem]{Lemma}
\newtheorem{proposition}[theorem]{Proposition}
\newtheorem{example}[theorem]{Example}
\numberwithin{theorem}{section}
\numberwithin{equation}{section}
\newcommand{\norm}[1]{\left\Vert#1\right\Vert}
\newcommand{\la}{\langle}
\newcommand{\ra}{\rangle}
\newcommand{\Comp}{\mathbb{C}}
\newcommand{\n}{\mathbb{N}}
\global\long\def\tp{\mathop{\xymatrix{*+<.7ex>[o][F-]{\scriptstyle \top}}
 } }
\begin{document}

\title[Strong Haagerup inequalities on non-Kac $O_F^+$]{Strong Haagerup inequalities on non-Kac free orthogonal quantum groups}

\author {Sang-Gyun Youn}

%\address{Beno\^\i{}t Collins, 
%Department of Mathematics, Graduate School of Science,  
%Kyoto University, Kyoto 606-8502, Japan.}
%\email{collins@math.kyoto-u.ac.jp}

\address{Sang-Gyun Youn, 
Department of Mathematics Education, Seoul National University, 
Gwanak-ro 1, Gwanak-gu, Seoul, Republic of Korea 08826}
\email{s.youn@snu.ac.kr}

\keywords{Free orthogonal quantum group, strong Haagerup inequality, ultracontractivity, complex interpolation, real interpolation} 
\subjclass[2010]{Primary 47A30, 43A15; Secondary 20G42, 81R50}

\maketitle

\begin{abstract}
We present natural analogues of strong Haagerup inequalities on non-Kac free orthogonal quantum groups $O_F^+$ in which $L^p$-analytic problems are harder due to their non-tracial nature. Furthermore, we prove optimality of the inequalities, and apply the obtained results to compute the optimal time for  ultracontractivity of the heat semigroup and to distinguish the complex interpolation space $L^p(O_F^+)$ and the real interpolation space $L^{p,p}(O_F^+)$.
\end{abstract}

\section{Introduction}

Let $\mathbb{F}_n$ be the non-abelian free group with generators $g_1,g_2,\cdots,g_n$ and let $C_r^*(\mathbb{F}_n)$ be the associated reduced group $C^*$-algebra.  There exists a natural word length function $|\cdot|: \mathbb{F}_n\rightarrow \n_0=\left \{0\right\}\cup \n$ with respect to the generating set $E_1=\left \{g_1,g_1^{-1},\cdots, g_n,g_n^{-1}\right\}$. In \cite{Ha79}, Haagerup proved the following result
\begin{equation}\label{eq-Haagerup}
\norm{\sum_{g\in E_k}f(g) \lambda_g}_{C_r^*(\mathbb{F}_n)}\leq (k+1)\left ( \sum_{g\in E_k}|f(g)|^2\right )^{\frac{1}{2}}
\end{equation}
for any $f:E_k\rightarrow \Comp$ where $E_k=\left \{x\in \mathbb{F}_n: |x|= k\right\}$, and applied \eqref{eq-Haagerup} to prove {\it metric approximation property} of $C_r^*(\mathbb{F}_n)$. The inequality \eqref{eq-Haagerup} is called the {\it Haagerup inequality} and has been studied for various other groups \cite{Jo90,Ha88,La00,Ch05,CoYo19} under the name {\it property RD} whose earliest definition was given by Connes \cite{Co94} in view of non-commutative geometry. See \cite{CoMo90,La02,Ch17} for more results and monumental applications of property RD.

\vspace{4pt}

A strengthened form of \eqref{eq-Haagerup}, namely {\it strong Haagerup inequality}, was introduced in \cite{KeSp07} for $*$-free R-diagonal families. In particular, let $E_k^+$ be the subset of elements in $E_k$ generated only by $g_1,g_2,\cdots,g_k$ without their inverses. Then we have
\begin{equation}\label{eq-strong-Haagerup}
\norm{\sum_{g\in E_k^+}f(g) \lambda_g}_{C_r^*(\mathbb{F}_n)}\lesssim \sqrt{k+1}\left ( \sum_{g\in E_k^+}|f(g)|^2\right )^{\frac{1}{2}}
\end{equation}
for any $f:E_k^+\rightarrow \Comp$. Here, $X\lesssim Y$ means that $X\leq KY$ for some universal constant $K$ which is independent of $X$ and $Y$. The non-self-adjoint subalgebra generated by $\left \{\lambda_g: g\in E_k^+\right\}$ can be considered a general ``{\it holomorphic}'' setting \cite{KeSp07} and the striking feature of \eqref{eq-strong-Haagerup} is the improvement of the constants from $k+1$ to $\sqrt{k+1}$. Subsequently, \eqref{eq-strong-Haagerup} was studied within the category of operator spaces in \cite{dlSa09}, and the results of \cite{KeSp07} were generalized in \cite{Br12c} with sharp inequalities on Kac free unitary quantum groups $U_N^+$.

\vspace{4pt}

The notion of property RD was extended to discrete quantum groups and was proved for Kac orthogonal free quantum groups in \cite{Ve07}. More precisely, \cite{Ve07} proved the following result
\begin{equation}
\norm{f}_{C_r(O_N^+)}\lesssim (k+1)\norm{f}_{L^2(O_N^+)}
\end{equation}
for any {\it homogenous} polynomial $f\in C_r(O_N^+)$ of degree $k$, where $\norm{\cdot}_{C_r(O_N^+)}$ is the reduced $C^*$-norm and $\norm{\cdot }_{L^p(O_N^+)}$ is the noncommutative $L^p$-norm.  This {\it quantum property RD} has made various operator algebraic applications \cite{Br12,Br14,VaVe07,Ve12,Yo18a} and interesting connections with quantum entanglement \cite{BrCo18,BCLY20}. 

\vspace{4pt}

In the case of non-Kac quantum group examples, two natural definitions of property RD have been introduced in \cite{Ve07} and \cite{BVZ15} respectively, and it has turned out in \cite{BVY21} that the property RD from either point of view is not satisfied for any non-amenable non-Kac orthogonal free quantum group. Nevertheless, in \cite{BVY21}, a weakened property RD
\begin{equation}\label{eq03}
\norm{f}_{C_r(O_F^+)}\lesssim (k+1)\norm{F}^{2k}\norm{f}_{L^2(O_F^+)}
\end{equation}
was proved for all homogeneous polynomials $f\in C_r(O_F^+)$ of degree $k$. 

\vspace{4pt}

We revisit \cite{BVY21} and optimize their key arguments for non-Kac $O_F^+$ cases to show that \eqref{eq03} can be sharpened to 
\begin{equation}\label{eq07}
\norm{f}_{C_r(O_F^+)}\lesssim \norm{F}^{2k}\norm{f}_{L^2(O_F^+)}
\end{equation}
in Theorem \ref{thm1}. Moreover, the main point of Section \ref{sec:rd} is that the function $\norm{F}^{2k}$ can be replaced by other slower growing functions up to some natural choices of generators as parallels to the strong Haagerup inequalities (Theorem \ref{thm1}, Corollary \ref{cor31}) and that the inequalities on {\it analytic} polynomials with single generators are indeed optimal even for non-homogeneous cases  (Theorem \ref{thm31}). Let us exhibit an extremal inequality for a canonical matrix $F=\displaystyle \sum_{i=1}^N \lambda_i e_{i,N+1-i}\in M_N(\mathbb{R})$ such that $F^2=\pm \text{Id}_N$ and $|\lambda_i|$ is monotone increasing with 
\begin{equation}
0<|\lambda_1|\leq |\lambda_2|\leq \cdots \leq |\lambda_n|<1<|\lambda_{N-n+1}|\leq \cdots \leq |\lambda_N|.
\end{equation}
In this case $C_r(O_F^+)$ have $N^2$ generators $u_{ij}$ ($1\leq i,j\leq N$) and one of our key results is
\begin{equation}\label{eq07.5}
\norm{f}_{C_r^*(O_F^+)}\approx \norm{f^*}_{L^2(O_F^+)}
\end{equation} 
for any homogeneous {\it analytic} polynomials $f=\displaystyle \sum_{1\leq i_1,\cdots,j_k\leq n} x_{i_1\cdots j_k}u_{i_1j_1}\cdots u_{i_kj_k}$ of degree $k$. More generally, non-commutative $L^p$-norms are estimated by\small
\begin{align}\label{eq07.6}
\norm{f}_{L^p(O_F^+)}\approx \left ( \frac{2}{N_q+\sqrt{N_q^2-4}} \right )^{\frac{k}{2}} \left ( \sum_{1\leq i_1,\cdots,j_k\leq n} | x_{i_1\cdots j_k}|^2   \left [ \lambda_{i_1}\cdots \lambda_{i_k} \right ]^{\frac{4}{p}} \left [ \lambda_{j_1}\cdots \lambda_{j_k} \right ]^{\frac{4}{p}-2} \right )^{\frac{1}{2}}
\end{align}
\normalsize for any $1\leq p\leq \infty$ and homogeneous analytic polynomials of degree $k$ given by $f=\displaystyle \sum_{1\leq i_1,\cdots,j_k\leq n} x_{i_1\cdots j_k}u_{i_1j_1}\cdots u_{i_kj_k}$. Here, $N_q=\text{Tr}(F^*F)$ and the constants in \eqref{eq07.6} depend only on the matrix $F$. More general results are established in Corollary \ref{cor31} and Example \ref{ex31}. This phenomenon does not appear on Kac free orthogonal quantum groups $O_N^+$.

\vspace{4pt}

The main result of Section \ref{sec:ultracontractivity} is the characterization of all $L^2$-$L^{\infty}$ {\it central multipliers} $T_{\varphi}$, identified with functions $\varphi:\n_0\rightarrow (0,\infty)$ such that $T_{\varphi}(f_k)=\varphi(k)f_k$ for any homogeneous polynomials $f_k$ of degree $k$. In Theorem \ref{thm-ultra1}, we prove that a central multiplier $T_{\varphi}$ satisfies
\begin{equation}
\norm{T_{\varphi}(f)}_{C_r(O_F^+)}\lesssim \norm{f}_{L^2(O_F^+)}
\end{equation}
for all polynomials $f$ on $O_F^+$ if and only if $\displaystyle \sum_{k=0}^{\infty}\varphi(k)^2\norm{F}^{4k}<\infty $. A remarkable outcome from this characterization is the accurate optimal time $t_F$ for {\it ultracontractivity} of the heat semigroup $(\Phi_t)_{t>0}$ satisfying that $\Phi_t:L^2(O_F^+)\rightarrow C_r(O_F^+)$ is bounded if and only if $t>t_F$. It was noted in \cite{BVY21} that  the optimal time $t_F$ for non-Kac case is non-trivial with the following bounds
\begin{equation}\label{eq04}
2(N_q-2)\log\norm{F}\leq t_F\leq 2 N_q\log\norm{F}.
\end{equation}
We apply the obtained results from Section \ref{sec:rd} with a detailed analysis of the heat semigroup (Lemma \ref{lem40}) to prove $t_F=2\sqrt{N_q^2-4}\log \norm{F}$ for any free orthogonal quantum group $O_F^+$ in Corollary \ref{cor:ultra2}.

\vspace{4pt}

In Section \ref{sec:interpolation}, we exhibit distinct differences between the complex interpolation space $L^p(O_F^+)=(L^{\infty}(O_F^+),L^1(O_F^+))_{\frac{1}{p}}$ and the real interpolation space $L^{p,p}(O_F^+)=(L^{\infty}(O_F^+),L^1(O_F^+))_{\frac{1}{p},p}$ for non-Kac $O_F^+$. Although these two interpolation methods are compatible in the tracial setting, it is no longer true in the non-tracial setting as pointed out in \cite{PiXu03}. As an analogous approach within the framework of compact quantum groups, we apply the results from Section \ref{sec:rd} to prove that the complex interpolation space $L^p(O_F^+)$ and the real interpolation space $L^{p,p}(O_F^+)$ have equivalent norms for some $1<p\neq 2<\infty$ if and only if $O_F^+$ is of Kac type.

\bigskip

\noindent {\bf Acknowledgments.} This research was supported by National Research Foundation of Korea (NRF) grant funded by the Korea government (MSIT) (No. 2020R1C1C1A01009681).

\section{Preliminaries}\label{sec:pre}

\subsection{Free orthogonal quantum group}

One of the most important examples of {\it compact quantum groups} is the so-called {\it free orthogonal quantum group} $O_F^+$, which has been introduced in \cite{Wa95,VaWa96}. See \cite{Wo87a,Wo87b,Ti08} and \cite{KuVa00,KuVa03} for more details of (locally) compact quantum groups.

For any invertible $N\times N$ matrix $F\in GL_N(\Comp)$ such that $\overline{F}F=\pm \text{Id}_N$, the free orthogonal quantum group $O_F^+$ is given by 
\begin{enumerate}
\item a universal unital $C^*$-algebra $C(O_F^+)$ generated by $N^2$ elements $u_{ij}$ ($1\leq i,j\leq N$) satisfying that $u=\displaystyle \sum_{i,j=1}^N e_{ij}\otimes u_{ij}\in M_N(\Comp)\otimes C(O_F^+)$ is unitary and $u=(F\otimes 1)u^c(F^{-1}\otimes 1)$ where $u^c=\displaystyle \sum_{i,j=1}^N e_{ij}\otimes u_{ij}^*$.
\item a unital $*$-homomorphism $\Delta:C(O_F^+)\rightarrow C(O_F^+)\otimes_{\text{min}}C(O_F^+)$ satisfying $\Delta(u_{ij})=\displaystyle \sum_{k=1}^N u_{ik}\otimes u_{kj}$ for all $1\leq i,j\leq N$.
\end{enumerate}

The unital $*$-algebra generated by $u_{ij}$ $(1\leq i,j\leq N)$ is a dense subspace of $C(O_F^+)$, which we denote by $\text{Pol}(O_F^+)$. We call any element in $\text{Pol}(O_F^+)$ a {\it polynomial} on $O_F^+$. There exists  a unique state $h:C(O_F^+)\rightarrow \Comp$ such that
\begin{equation}
(\text{id}\otimes h)\circ \Delta = h(\cdot)1 = (h\otimes \text{id})\circ \Delta.
\end{equation}
The state $h$ is faithful on $\text{Pol}(O_F^+)$ and we call $h$ the {\it Haar state} on $O_F^+$. The completion of $\text{Pol}(O_F^+)$ with respect to the inner product 
\begin{equation}
\la f,g\ra=h(g^*f),~f,g\in \text{Pol}(O_F^+),
\end{equation}
is denoted by $L^2(O_F^+)$. The associated GNS representation $\pi:C(O_F^+)\rightarrow B(L^2(O_F^+))$ is given by
\begin{equation}
\pi(a)b=ab,~a,b\in \text{Pol}(O_F^+)
\end{equation}
and the image $\pi(C(O_F^+))$ is denoted by $C_r(O_F^+)$. We denote the associated von Neumann algebra $C_r(O_F^+)''\subseteq B(L^2(O_F^+))$ by $L^{\infty}(O_F^+)$. The Haar state $h$ naturally extends to a normal faithful state $h$ on $L^{\infty}(O_F^+)$.

It is known from \cite{BiDeVa06,BrKi16} that free orthogonal quantum groups $O_{F_1}^+$ and $O_{F_2}^+$ with $\overline{F_1}F_1=\pm \text{Id}_N=\overline{F_2}F_2$ are isomorphic if and only if $F_2=wF_1w^t$ for some unitary matrix $w$. Moreover, with slight modifications on the choices of unitaries, we obtain the following canonical form of $F\in GL_N(\Comp)$ satisfying $\overline{F}F=\pm \text{Id}_N$:

\begin{lemma}\label{lem:canonical}
For any $F\in GL_N(\Comp)\setminus \mathcal{U}(N)$ such that $\overline{F}F=\pm \text{Id}_N$ there exists a unitary $w\in \mathcal{U}(N)$ such that \begin{equation}
wFw^t=\displaystyle \sum_{i=1}^N \lambda_i e_{i,N+1-i}\in M_N(\mathbb{R})
\end{equation} 
such that $(wFw^t)^2=\pm \text{Id}_N$, $0<|\lambda_1|\leq  \cdots \leq |\lambda_n|<1<|\lambda_{N-n+1}|\leq \cdots \leq |\lambda_N|$ and $|\lambda_j|=1$ for all $n<j\leq N-n$.
\end{lemma}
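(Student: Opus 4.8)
The statement is essentially a normal-form result for the matrix $F$ under the equivalence $F \mapsto wFw^t$ (with $w$ unitary) that governs isomorphism of the $O_F^+$, subject to the constraint $\overline F F = \pm \mathrm{Id}_N$. So the plan is to first reduce to a canonical shape using the polar-type decomposition and then diagonalize an auxiliary positive operator. Concretely, I would start from the observation that $\overline F F = \pm \mathrm{Id}_N$ forces $\overline F = \pm F^{-1}$, hence $F^* = \pm \overline{F^{-1}}^{\,t}$... more usefully, $F^t \overline F{}^t = \pm \mathrm{Id}_N$ as well, and combining these one extracts that the positive matrix $Q := F^*F$ satisfies a symmetry relation. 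The key auxiliary object in the $O_F^+$ literature is $Q = F^*F > 0$; the first step is to show that $\overline F F = \pm\mathrm{Id}_N$ implies $\overline Q = F^t\overline F{}^t\cdot\overline{F^*F}\cdots$ — more cleanly, that $Q$ and $Q^{-1}$ are related by the "flip" coming from $F$, i.e. $\overline{F}\,Q\,\overline{F}^{-1}$ equals $Q^{-1}$ up to transpose, which is exactly what will produce the antidiagonal pattern $e_{i,N+1-i}$ after diagonalizing $Q$.

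\medskip

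\textbf{Key steps.} (1) Diagonalize $Q=F^*F$: choose a unitary $v$ with $v^*Qv = D$ diagonal with positive entries $\mu_1,\dots,\mu_N$. (2) Replace $F$ by $F_1 := vFv^t$ — wait, one must be careful, conjugation by $w$ acts as $F\mapsto wFw^t$ so $F_1^*F_1 = \overline w^{\,t}\cdots$; the right choice is to pick $w$ so that $w(F^*F)w^* $ is diagonal, which is the standard move, after which $F_1 = wFw^t$ has $F_1^*F_1$ diagonal. (3) Show that a matrix $G$ with $\overline G G = \pm\mathrm{Id}_N$ and $G^*G$ diagonal must, after a further diagonal-unitary adjustment (permutation composed with phases), take the antidiagonal real form $\sum_i \lambda_i e_{i,N+1-i}$: the relation $\overline G G = \pm \mathrm{Id}$ together with diagonality of $G^*G$ pins down the support pattern of $G$ to a permutation matrix times a diagonal, and the constraint $\overline G G=\pm\mathrm{Id}$ forces that permutation to be the order-reversing involution $i\mapsto N+1-i$ (so that $(wFw^t)^2 = \pm \mathrm{Id}_N$ automatically), and forces the entries to be real after absorbing phases into a diagonal unitary $w'$. (4) Conjugate by a permutation unitary to order the $|\lambda_i|$ increasingly; the constraint $\overline G G = \pm\mathrm{Id}_N$ reads $\lambda_i \lambda_{N+1-i} = \pm 1$, so $|\lambda_i| < 1 \iff |\lambda_{N+1-i}| > 1$, giving the symmetric distribution around $1$ with a block of $|\lambda_j|=1$ in the middle, i.e. exactly $0<|\lambda_1|\le\cdots\le|\lambda_n|<1<|\lambda_{N-n+1}|\le\cdots\le|\lambda_N|$ and $|\lambda_j|=1$ for $n<j\le N-n$. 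Finally, since $F\notin \mathcal U(N)$, not all $|\lambda_i|=1$, so $n\ge 1$ and the strict inequalities are genuine.

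\medskip

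\textbf{Main obstacle.} The only nontrivial point is Step (3): showing that $\overline G G = \pm\mathrm{Id}_N$ together with $G^*G$ diagonal forces $G$ to be supported on the anti-diagonal (rather than on some other permutation). The argument is that $G^*G$ diagonal means the columns of $G$ are pairwise orthogonal, so $G = U\Lambda$ with $U$ unitary and $\Lambda$ diagonal positive; then $\overline G G = \overline U\,\overline\Lambda\, U\Lambda = \pm\mathrm{Id}$ gives $\overline U U = \pm \Lambda^{-1}\overline\Lambda^{-1} = \pm\Lambda^{-2}$ (using $\overline\Lambda=\Lambda$), so $\overline U U$ is a positive diagonal matrix that also (being a product of two unitaries, each of modulus... ) has modulus-one eigenvalues — hence $\overline U U$ is a diagonal $\pm1$ matrix and $\Lambda^{-2}$ is diagonal with entries matching; but then $\overline U U = \pm\mathrm{Id}$-type, forcing $U$ to satisfy $\overline U = \pm U^{-1}=\pm U^*{}^t\cdots$ — unwinding this, $U$ up to a diagonal unitary is forced to be the order-reversing permutation matrix, which is where the $e_{i,N+1-i}$ pattern and the relation $(wFw^t)^2=\pm\mathrm{Id}_N$ come from. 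I expect this diagonalization/permutation bookkeeping to be the bulk of the work; everything else (ordering the $\lambda_i$, reading off the symmetric modulus distribution from $\lambda_i\lambda_{N+1-i}=\pm1$, and invoking $F\notin\mathcal U(N)$ to get strict inequalities) is routine. The cited results of \cite{BiDeVa06, BrKi16} supply the equivalence $F\mapsto wFw^t$ that legitimizes all these conjugations.
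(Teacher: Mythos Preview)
Your approach is genuinely different from the paper's. The paper does not derive the anti-diagonal form from scratch; it quotes the classification of \cite{BiDeVa06,BrKi16} as a black box, which already produces a block form (for $\overline FF=\mathrm{Id}_N$)
\[
w_0Fw_0^t=\begin{pmatrix}0&D(\lambda_1,\dots,\lambda_n)&0\\ D(\lambda_1^{-1},\dots,\lambda_n^{-1})&0&0\\ 0&0&\mathrm{Id}_{N-2n}\end{pmatrix},
\]
and then applies one further explicit unitary $w_1$ (built from the exchange matrix $J_n$ and a matrix $V_{N-2n}$ satisfying $V_{N-2n}V_{N-2n}^t=J_{N-2n}$) to rearrange this into the anti-diagonal shape. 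The only nontrivial ingredient is the construction of $V_{N-2n}$, done by diagonalizing the symmetric orthogonal matrix $J_{N-2n}$.

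Your direct route via diagonalizing $Q=F^*F$ and analyzing the polar part $U$ amounts to re-proving that classification, and Step~(3) as written has a real gap. The computation $\overline UU=\pm\Lambda^{-2}$ is incorrect: from $\overline U\,\Lambda\, U\,\Lambda=\pm\mathrm{Id}$ one only gets $\overline U\Lambda U=\pm\Lambda^{-1}$, equivalently (using $\overline U=(U^t)^{-1}$) $\Lambda U\Lambda=\pm U^t$, which entry-wise reads $\lambda_i\lambda_j U_{ij}=\pm U_{ji}$ and hence $U_{ij}\neq 0\Rightarrow\lambda_i\lambda_j=1$. This pins the support of $U$ to the pairing $\lambda\leftrightarrow\lambda^{-1}$, but it does \emph{not} force $U$ to be a monomial matrix when eigenvalues of $Q$ repeat. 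On the eigenvalue-$1$ block of size $N-2n$ the constraint says only that the restricted unitary $U_0$ satisfies $U_0=\pm U_0^t$, and reducing such a (skew-)symmetric unitary to the exchange matrix $J_{N-2n}$ under $U_0\mapsto w_0U_0w_0^t$ requires the Autonne--Takagi factorization --- precisely the content of the paper's $V_{N-2n}$ step. Analogous work is needed on any repeated eigenvalue $\lambda\neq 1$. So your assertion that ``$U$ up to a diagonal unitary is forced to be the order-reversing permutation matrix'' is false in general; to repair Step~(3) you must invoke Autonne--Takagi on each eigenblock, at which point you are essentially re-deriving the \cite{BiDeVa06,BrKi16} result that the paper simply cites.
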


\begin{proof}
Let us divide the two cases (1) $\overline{F}F=\text{Id}_N$ and (2) $\overline{F}F=-\text{Id}_N$. A real symmetric matrix $J_n=\displaystyle \sum_{i=1}^n e_{i, n+1-i}\in M_n(\mathbb{R})$ has eigenvalues $\pm 1$, so there exists an orthogonal matrix $G\in \mathcal{O}(n)$ such that 
\begin{align}
J_n&=G\cdot D(1, \cdots, 1,-1,\cdots,-1)\cdot G^{t}\\
=&[G\cdot D(1,\cdots, 1,i,\cdots,i)]\cdot [G\cdot D(1,\cdots, 1,i,\cdots,i)]^t.
\end{align} 
where $D(x_1,\cdots, x_m)$ is the $n\times n$ diagonal matrix with entries $x_1,\cdots,x_m$. Let us denote by $V_n$ the above $G\cdot D(1,\cdots,1,i,\cdots,i)$.
\begin{enumerate}
\item If $\overline{F}F=\text{Id}_N$, then by \cite{BiDeVa06,BrKi16} there exists $w_0\in \mathcal{U}(N)$ such that  
\begin{equation}
w_0Fw_0^t=\displaystyle \left [ \begin{array}{ccc}
0&D(\lambda_1,\cdots,\lambda_n)&0\\
D(\lambda_1^{-1},\cdots,\lambda_n^{-1})&0&0\\
0&0&\text{Id}_{N-2n}\\
\end{array}\right ]
\end{equation} 
with $0<\lambda_1\leq \cdots \leq \lambda_n<1$. Let us take $w_1=\displaystyle \left [ \begin{array}{ccc}
\text{Id}_n&0&0\\
0&0&V_{N-2n}\\
0&J_n&0
\end{array}\right ]$. Then for $w=w_1w_0$ we have
\begin{align}
wFw^t=&\left [ \begin{array}{ccc}
0&0&D(\lambda_1,\cdots,\lambda_n)J_n\\
0&V_{N-2n}V_{N-2n}^t&0\\
J_nD(\lambda_1^{-1},\cdots,\lambda_n^{-1})&0&0
\end{array}\right ]\\
&=\left [ \begin{array}{ccc}
0&0&\sum_{i=1}^n \lambda_i e_{i,n+1-i}\\
0&J_{N-2n}&0\\
\sum_{i=1}^n \lambda_{n+1-i}^{-1} e_{i,n+1-i}&0&0
\end{array}\right ],
\end{align}
which is the desired form.
\item If $\overline{F}F=-\text{Id}_N$, then $N$ has to be an even number and, by \cite{BiDeVa06,BrKi16}, there exists $w_0\in \mathcal{U}(N)$ such that  
\begin{equation}
w_0Fw_0^t=\displaystyle \left [ \begin{array}{ccc}
0&D(\lambda_1,\cdots,\lambda_n,1,\cdots,1)\\
-D(\lambda_1^{-1},\cdots,\lambda_n^{-1},1,\cdots,1)&0
\end{array}\right ]
\end{equation} 
with $0<\lambda_1\leq \lambda_2\leq \cdots \leq \lambda_n<1$. Then, using $w_1=\left [\begin{array}{cc} \text{Id}_{\frac{N}{2}}&0\\ 0&J_{\frac{N}{2}} \end{array} \right ]$, we reach the conclusion similarly.
\end{enumerate}

\end{proof}

From now on, let us suppose that $F$ is of an anti-diagonal form as in Lemma \ref{lem:canonical}, which we call a {\it canonical $F$-matrix} (differently from \cite{BiDeVa06,BrKi16}). Then $F^*F$ and all submatrices of $(F^*F)^{\otimes k}$ are diagonal matrices, and we have
\begin{equation}\label{eq25}
u_{ij}= \lambda_{i} \lambda_{j}^{-1} u_{N+1-i,N+1-j}^*,~1\leq i,j\leq N,
\end{equation}
from the defining relation $u=(F\otimes 1)u^c(F^{-1}\otimes 1)$.

\subsection{Fourier series and Schur orthogonality}

A finite dimensional {\it unitary representation} of $O_F^+$ is given by a unitary $v=\displaystyle \sum_{i,j=1}^{n_v}e_{ij}\otimes v_{ij}\in M_{n_v}(\Comp)\otimes \text{Pol}(O_F^+)$ such that $\Delta(v_{ij})=\displaystyle \sum_{k=1}^{n_v}v_{ik}\otimes v_{kj}$ for all $1\leq i,j\leq n_v$. Furthermore, $v$ is called {\it irreducible} if 
\begin{equation}
\left \{A\in M_{n_v}(\Comp): (A\otimes 1)v= v(A\otimes 1) \right\}=\Comp\cdot \text{Id}_{n_v}.
\end{equation}
It was shown in \cite{Ba96} that all irreducible unitary representations of $O_F^+$ are labeled by $u^{(0)},u^{(1)}, u^{(2)},\cdots$ up to unitary equivalence. In particular, $u^{(0)}=1$ is the 1-dimensional trivial representation and the {\it fundamental representation} $u^{(1)}$ is given by $\displaystyle u=\sum_{i,j=1}^N e_{ij}\otimes u_{ij}$ where $u_{ij}$'s are the generators of $C(O_F^+)$. The $k$-fold tensor representation of $u$ is \small
\begin{equation}
u^{\tiny \tp \normalsize k}=u\tp u\tp \cdots \tp u= \sum_{\substack{1\leq i_1,\cdots, i_k\leq N\\ 1\leq j_1,\cdots,j_k\leq N}}e_{i_1j_1}\otimes e_{i_2j_2}\otimes \cdots \otimes e_{i_kj_k}\otimes u_{i_1j_1} u_{i_2j_2} \cdots  u_{i_kj_k}.
\end{equation}
\normalsize

We define the subspace $H_k\subseteq (\Comp^N)^{\otimes k}$ as the space of all $\xi\in (\Comp^N)^{\otimes k}$ satisfying
\begin{equation}
(\text{id}_N^{\otimes i}\otimes T^*\otimes \text{id}_N^{\otimes (N-2-i)})\xi=0
\end{equation}
for all $i=0,1,\cdots,N-2$, where $T=\displaystyle \sum_{j=1}^N  e_j\otimes F e_j$. Then the $k$-th irreducible unitary representation $u^{(k)}$ is given by  $(\iota_k^* \otimes 1)u^{\tiny \tp \normalsize k}(\iota_k\otimes 1)\in B(H_k)\otimes \text{Pol}(O_F^+)$ where $\iota_k: H_k\hookrightarrow (\Comp^N)^{\otimes k}$ is the isometric embedding. We denote by $n_k=\text{dim}(H_k)$ the {\it classical dimension} of $u^{(k)}$ and write $u^{(k)}=\displaystyle \sum_{i,j=1}^{n_k}e_{ij}\otimes u^{(k)}_{ij}$. The space spanned by $u^{(k)}_{ij}$ for fixed $k$ is denoted by
\begin{equation}
\text{Pol}_k(O_F^+)=\text{span}\left \{ u^{(k)}_{ij}: 1\leq i,j\leq n_k \right \}
\end{equation}
and call any element in $\text{Pol}_k(O_F^+)$ a {\it homogeneous polynomial} of degree $k$. Furthermore, any polynomial is spanned by homogeneous polynomials, i.e. we have $\displaystyle \text{Pol}(O_F^+)=\bigoplus_{k\in \n_0} \text{Pol}_k(O_F^+)=\text{span}\left \{u^{(k)}_{ij}:  ~k\in \n_0, ~1\leq i,j\leq n_k \right \}$.

Schur's orthogonality relations (with the above choices of ONB's) tell us that there exist invertible positive diagonal matrices $Q(k)\in M_{n_k}(\Comp)$ such that
\begin{align}
\label {eq22}h\left ((u^{(k_1)}_{i_1j_1})^*u^{(k_2)}_{i_2j_2}\right )&=\delta_{k_1,k_2}\delta_{i_1,i_2}\delta_{j_1,j_2}\frac{Q(k)_{i_1i_1}^{-1}}{\text{Tr}(Q(k))} ,\\
h\left (u^{(k_1)}_{i_1j_1}(u^{(k_2)}_{i_2j_2})^*\right )&=\delta_{k_1,k_2}\delta_{i_1,i_2}\delta_{j_1,j_2}\frac{Q(k)_{j_1j_1}}{\text{Tr}(Q(k))}.
\end{align}
We call $d_k=\text{Tr}(Q(k))=\text{Tr}(Q(k)^{-1})$ the {\it quantum dimension} of $u^{(k)}$ and say that $O_F^+$ is of {\it Kac type} if $d_k=n_k$, i.e. $Q(k)=\text{Id}_{n_k}$ for all $k\in \n_0$. In particular, $Q(1)=F^*F$ and we denote by $N_q$ the associated quantum dimension $d_1=\text{Tr}(Q(1))=\text{Tr}(F^*F)$.

In particular, \eqref{eq22} implies that a canonical orthonormal basis of $L^2(O_F^+)$ is given by 
\begin{equation}
\left \{\sqrt{d_k Q(k)_{ii}}u^{(k)}_{ij}:  ~k\in \n_0, ~1\leq i,j\leq n_k \right \}.
\end{equation}
Hence, for each $f\in \text{Pol}(O_F^+)$, there exist uniquely determined matrices $A(k)\in M_{n_k}(\Comp)$ such that 
\begin{equation}\label{eq251}
f=\displaystyle \sum_{k\geq 0}\sum_{i,j=1}^{n_k} d_k (A(k)Q(k))_{ij}u^{(k)}_{ji}
\end{equation}
The matrices $A(k)$ are denoted by $\widehat{f}(k)$ and called the {\it Fourier coefficients} of $f\in \text{Pol}(O_F^+)$. We call \eqref{eq251} the {\it Fourier series} of $f\in \text{Pol}(O_F^+)$ and the following {\it Plancherel identity}
\begin{equation}\label{eq20}
\norm{f}_{L^2(O_F^+)}=h(f^*f)^{\frac{1}{2}}=\left (\sum_{k\geq 0}d_k \text{Tr}(\widehat{f}(k)^*\widehat{f}(k)Q(k)) \right )^{\frac{1}{2}}
\end{equation}
holds for any $f\in \text{Pol}(O_F^+)$. We refer the reader to \cite{Ti08} for more general descriptions of Schur orthogonality relations on compact quantum groups.

\subsection{The complex interpolation and real interpolation methods}

Let us gather some basics of the complex and the real interpolation methods on Banach spaces from \cite{BeLo76}. Note that these methods have natural analogues in the category of operator spaces \cite{Pi96,Xu96}. 

A pair of Banach spaces $(X_0,X_1)$ is called {\it compatible} if there exists a topological vector space $X$ with continuous embeddings $j_0:X_0\rightarrow X$ and $j_1:X_1\rightarrow X$. Let us write $j_0(X_0)+j_1(X_1)$ simply as $X_0+X_1$. For any $v\in X_0+X_1$ we define
\begin{equation}
\norm{v}_{X_0+X_1}=\inf \left \{\norm{x_0}_{X_0}+\norm{x_1}_{X_1}: x=x_0+x_1, x_0\in X_0, x_1\in X_1 \right \}.
\end{equation}

For the complex method, let us take $S=\left \{z\in \Comp: 0< \text{Re}(z)< 1\right\}$ and denote by $\mathcal{F}(X_0,X_1)$ the set of all continuous bounded functions $f:\overline{S}\rightarrow X_0+X_1$ such that 
\begin{enumerate}
\item $f|_S:S\rightarrow X_0+X_1$ is analytic and
\item the restrictions $f|_{\partial_0}:\partial_0\rightarrow X_0$ and $f|_{\partial_1}:\partial_1\rightarrow X_1$ are bounded.
\end{enumerate} 
For any $0<\theta<1$ the complex interpolation space 
\begin{equation}
X_{\theta}=(X_0,X_1)_{\theta}=\left \{ f(\theta): f\in \mathcal{F}(X_0,X_1)\right\} \subset X_0+ X_1
\end{equation} 
is a Banach space with respect to 
\begin{equation}
\norm{x}=\inf \left \{ \max \left \{\sup_{z\in \partial_0}\norm{f(z)}_{X_0}, \sup_{z\in \partial_1}\norm{f(z)}_{X_1}\right\}:f\in \mathcal{F}(X_0,X_1),f(\theta)=x\right\}.
\end{equation}

For the real interpolation, let us explain some basics of the K-method and the J-method. For any $t>0$ the K-functional of $x\in X_0+ X_1$ is defined by
\begin{align}
K(x,t)&=\inf\left \{ \norm{x_0}_{X_0}+t\norm{x_1}_{X_1}: x=x_0+x_1,x_0\in X_0,x_1\in X_1 \right\}.
\end{align}
We define
\begin{align}
\norm{x}_{\theta,q,K}=\left ( \int_0^{\infty} (t^{-\theta}K(x,t))^{q}\frac{dt}{t} \right )^{\frac{1}{q}},~x\in X_0+X_1
\end{align}
for any $0<\theta<1$ and $1\leq q<\infty$, and 
\begin{align}
\norm{x}_{\theta,\infty,K}= \sup_{t>0}t^{-\theta}K(x,t),~0\leq \theta \leq 1.
\end{align}
The real interpolation space $(X_0,X_1)_{\theta,q,K}$ by the K-method is defined as the subspace of all elements $x$ in $X_0 + X_1$ such that $\norm{x}_{\theta,q,K}<\infty$. On the other side, for any $t>0$,  the J-functional of $x\in X_0\cap X_1$ is defined by
\begin{equation}
J(x,t)=\max \left \{ \norm{x}_{X_0},t \norm{x}_{X_1}\right\}
\end{equation}
and we define
\begin{equation}
\norm{x}_{\theta,q,J}=\inf \left ( \int_0^{\infty}(t^{-\theta}J(f(t),t))^q \frac{dt}{t} \right )^{\frac{1}{q}},~x\in X_0+X_1,
\end{equation}
where the infimum runs over all strongly measurable function $f:(0,\infty)\rightarrow X_0\cap X_1$ satisfying
\begin{equation}\label{eq21}
x=\int_0^{\infty} f(t)\frac{dt}{t}.
\end{equation}
Here the convergence in \eqref{eq21} is with respect to the K-functional $K(\cdot , 1)$. The real interpolation space $(X_0,X_1)_{\theta,q,J}$ by the $J$-method is defined as the space of all elements in $X_0+X_1$ for which $\norm{x}_{\theta,p,J}<\infty$. 

Since the real interpolation spaces $(X_0,X_1)_{\theta,q,K}$ and $(X_0,X_1)_{\theta,q,J}$ have equivalent norms for any $0<\theta<1$ and $1\leq q\leq \infty$ \cite[Theorem 2.8]{BeSh88}, let us denote by $(X_0,X_1)_{\theta,q}$ the common space. In the case of $X$-valued $L^p$-spaces with a Banach space $X$, it is known that 
\begin{align}
L^p(\mu;X)&=(L^{\infty}(\mu ;X),L^1(\mu;X))_{\frac{1}{p}}\\
=&(L^{\infty}(\mu ;X),L^1(\mu ;X))_{\frac{1}{p},p}=L^{p,p}(\mu;X)
\end{align}
where $L^{p,q}(\mu; X)$ is the $X$-valued Lorentz space.

In the case of free orthogonal quantum group $O_F^+$, we define $L^1(O_F^+)$ as the predual of the von Neumann algebra $L^{\infty}(O_F^+)$. Then $L^{\infty}(O_F^+)$ is naturally embedded into $L^1(O_F^+)$ as a dense subspace $\iota(L^{\infty}(O_F^+))$ by
\begin{equation}
\la \iota(x) ,y \ra_{L^1(O_F^+),L^{\infty}(O_F^+)}=h(yx),~x,y\in L^{\infty}(O_F^+).
\end{equation}
The non-commutative $L^p$-space $L^p(O_F^+)$ is defined as the complex interpolation space $(L^{\infty}(O_F^+),L^1(O_F^+))_{\frac{1}{p}}$ for any $1< p < \infty$, and the resulting norm structure of $(L^{\infty}(O_F^+),L^1(O_F^+))_{\frac{1}{p}}$ for $p=2$ is exactly same with the $L^2$-norm that we defined in \eqref{eq20}. The space of polynomials $\text{Pol}(O_F^+)$ is dense in $L^p(O_F^+)$ for any $1\leq p<\infty$.

\section{Strong Haagerup inequalities}\label{sec:rd}

Let $S(l,k)=\left \{|l-k|,|l-k|+2,\cdots, l+k\right\}$ and write $r=\displaystyle \frac{k+n-l}{2}$ for any $n\in S(l,k)$. We denote by $p_l$ the orthogonal projection from $L^2(O_F^+)$ onto $\text{Pol}_l(O_F^+)$. Let us rewrite some key arguments in \cite{BVY21} under our notations. Incorporating the proofs of Lemma 2.2 and Proposition 3.4 in \cite{BVY21}, we have
\begin{equation}\label{eq30}
\norm{p_l f p_n}_{B(L^2(O_F^+))}\leq C_q \sqrt{d_k} \left (\sum_{i,j=1}^{n_r}\text{Tr}\left (a_{ij}^*a_{ij}Q(k-r)\right )  Q(r)_{ii}Q(r)_{jj}^2 \right )^{\frac{1}{2}}
\end{equation}
for any $f\in \text{Pol}_k(O_F^+)$ and $n\in S(l,k)$, where the Fourier coefficient $\widehat{f}(k)$ is given by $\displaystyle \sum_{i,j=1}^{n_r}a_{ij}\otimes E_{ij}$ under the embedding $B(H_k)\hookrightarrow B(H_{k-r})\otimes B(H_r)$ and $E_{ij}=e_i e_j^*$ with respect to the canonical orthonormal basis $(e_i)_{i=1}^{n_r}$ of $H_r$. Here, the constant $C_q$ is given by
\begin{equation}
C_q= \frac{1}{1-q^2}\left ( \prod_{m=1}^{\infty}\frac{1}{1-q^{2m}}\right )^3
\end{equation}
where $q\in (0,1)$ is a solution of $x+x^{-1}=N_q=\text{Tr}(F^*F)$.

Let us denote by $\displaystyle M_f(l,n)=\max_{\substack{1\leq i,j\leq n_r\\ a_{ij}\neq 0}}\left \{Q(r)_{ii}^{\frac{1}{2}}Q(r)_{jj}^{\frac{1}{2}}\right\}\leq \norm{F}^{k+n-l}$. Then we have 
\begin{equation}\label{eq30.5}
\norm{p_l f p_n}_{B(L^2(O_F^+))}\leq C_q M_f(l,n) \norm{f}_{L^2(O_F^+)}
\end{equation}
since $\displaystyle d_k\text{Tr}\left (\widehat{f}(k)^*\widehat{f}(k)Q(k)\right )=d_k \sum_{i,j=1}^{n_r}\text{Tr}(a_{ij}^*a_{ij}Q(k-r))\cdot Q(r)_{jj}$. 

Recall that \eqref{eq30.5} was an important step in \cite{BVY21} to obtain
\begin{equation}\label{eq30.6}
\norm{f}_{C_r(O_F^+)}\lesssim (k+1)\norm{F}^{2k}\norm{f}_{L^2(O_F^+)}
\end{equation} 
for any homogeneous polynomials $f\in \text{Pol}_k(O_F^+)$. Now, let us sharpen \eqref{eq30.6} using some optimized arguments for non-Kac $O_F^+$ and the exponential growth of $\norm{F}^{2k}$.

\begin{theorem}\label{thm1}
Let $O_F^+$ be a non-Kac free orthogonal quantum group and $f\in \text{Pol}_k(O_F^+)$ such that $M_f(l,n)\leq R^{k+n-l}$ for some positive $R\neq 1$. Then we have
\begin{equation}
\norm{f}_{C_r(O_F^+)}\leq \frac{C_q(1-R^{2k+2})}{1-R^2}\norm{f}_{L^2(O_F^+)}.
\end{equation}
In particular, for any $f\in \text{Pol}_k(O_F^+)$, we have
\begin{equation}\label{eq31}
\norm{f}_{C_r(O_F^+)} \leq \frac{ C_q (\norm{F}^{2k+2}-1)}{\norm{F}^2-1} \norm{f}_{L^2(O_F^+)}\leq \frac{C_q\norm{F}^2}{\norm{F}^2-1}\norm{F}^{2k}\norm{f}_{L^2(O_F^+)}.
\end{equation}
\end{theorem}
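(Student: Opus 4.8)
The plan is to view $f$ as the operator $\pi(f)$ of left multiplication on $L^2(O_F^+)$ and to decompose it, along the fusion rules of $O_F^+$, into $k+1$ ``shift-diagonal'' pieces; since $\norm{f}_{C_r(O_F^+)}=\norm{\pi(f)}_{B(L^2(O_F^+))}$, the $C^*$-norm is then at most the sum of the norms of these pieces, and each piece is controlled by \eqref{eq30.5}. The point is that after this splitting the pieces carry the geometric weights $R^{0},R^{2},\dots,R^{2k}$ rather than all carrying the worst weight $R^{2k}$, so the crude factor $k+1$ of \eqref{eq30.6} gets replaced by a finite geometric sum.

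First I would recall Banica's fusion rules \cite{Ba96}: $f\in\text{Pol}_k(O_F^+)$ sends $\text{Pol}_n(O_F^+)$ into $\bigoplus_{l\in S(n,k)}\text{Pol}_l(O_F^+)$, so $p_l\,\pi(f)\,p_n$ vanishes unless $n\in S(l,k)$, and in particular unless $l-n\in\{-k,-k+2,\dots,k\}$. Setting $\sigma=l-n$ and $T_\sigma=\sum_{l\ge 0}p_l\,\pi(f)\,p_{l-\sigma}$ (with $p_m:=0$ for $m<0$), we obtain $\pi(f)=\sum_{\sigma}T_\sigma$ with $\sigma$ ranging over the $k+1$ values $\{-k,-k+2,\dots,k\}$. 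For fixed $\sigma$ the summands of $T_\sigma$ act between the mutually orthogonal subspaces $\text{Pol}_{l-\sigma}(O_F^+)$ (domains) and $\text{Pol}_{l}(O_F^+)$ (ranges), so $\norm{T_\sigma}_{B(L^2(O_F^+))}=\sup_{l}\norm{p_l\,\pi(f)\,p_{l-\sigma}}_{B(L^2(O_F^+))}$.

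Then, for each admissible $l$ write $n=l-\sigma$; here $r=\tfrac{k+n-l}{2}=\tfrac{k-\sigma}{2}$ does not depend on $l$, and the hypothesis $M_f(l,n)\le R^{k+n-l}$ becomes $M_f(l,l-\sigma)\le R^{k-\sigma}$. Hence \eqref{eq30.5} gives $\norm{p_l\,\pi(f)\,p_{l-\sigma}}_{B(L^2(O_F^+))}\le C_q R^{k-\sigma}\norm{f}_{L^2(O_F^+)}$, so $\norm{T_\sigma}\le C_q R^{k-\sigma}\norm{f}_{L^2(O_F^+)}$. Summing over $\sigma\in\{-k,-k+2,\dots,k\}$, i.e. over $k-\sigma=2j$ with $j=0,1,\dots,k$, and using $R\ne1$ to sum the finite geometric series,
\begin{equation*}
\norm{f}_{C_r(O_F^+)}\le\sum_{\sigma}\norm{T_\sigma}\le C_q\Big(\sum_{j=0}^{k}R^{2j}\Big)\norm{f}_{L^2(O_F^+)}=\frac{C_q(1-R^{2k+2})}{1-R^{2}}\norm{f}_{L^2(O_F^+)}.
\end{equation*}
For \eqref{eq31} I would specialise to $R=\norm{F}$, which satisfies $\norm{F}>1$ for non-Kac $O_F^+$ and for which $M_f(l,n)\le\norm{F}^{k+n-l}$ holds for every $f$; the last inequality of \eqref{eq31} is then just $\norm{F}^{2k+2}-1\le\norm{F}^{2k+2}$.

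I do not expect a genuine obstacle, since the hard $L^2\!\to\!L^2$ input is already packaged in \eqref{eq30.5} (itself extracted from \cite{BVY21}). The two things to check carefully are: (i) the block decomposition, which rests on the fusion rules together with the mutual orthogonality of the matrix-coefficient subspaces --- this is exactly what upgrades the triangle-inequality loss of $k+1$ in \eqref{eq30.6} to a convergent geometric series; and (ii) the observation that $k+n-l$, hence the bound $R^{k+n-l}$ on $M_f$, is constant along each shift class $l-n=\sigma$, which is why the pieces $T_\sigma$ line up precisely with the terms $R^{2j}$.
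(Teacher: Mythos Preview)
Your proof is correct and rests on the same input \eqref{eq30.5} as the paper's, but the organization is different and a bit more streamlined. The paper works directly with $\|f\xi\|_2^2$: it bounds $\|p_l(f\xi)\|_2\le\sum_{n\in S(l,k)}\|p_lfp_n\|\,\|p_n\xi\|_2$, then applies a weighted Cauchy--Schwarz
\[
\Big(\sum_{n\in S(l,k)}R^{k+n-l}\|p_n\xi\|_2\Big)^2\le\Big(\sum_{n\in S(l,k)}R^{k+n-l}\Big)\Big(\sum_{n\in S(l,k)}R^{k+n-l}\|p_n\xi\|_2^2\Big),
\]
and finishes with an explicit double-sum computation over $l$ and $n$ to reach the same constant $\frac{C_q(1-R^{2k+2})}{1-R^2}$. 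Your decomposition $\pi(f)=\sum_\sigma T_\sigma$ into shift-diagonal pieces makes the crucial observation---that the weight $R^{k+n-l}$ depends only on $\sigma=l-n$---structurally transparent, and replaces the Cauchy--Schwarz step and the double-sum bookkeeping by the single block-diagonal fact $\|T_\sigma\|=\sup_l\|p_l\pi(f)p_{l-\sigma}\|$ followed by a triangle inequality over the $k+1$ values of $\sigma$. The two arguments are equivalent in spirit; yours is slightly more elementary and bypasses the intermediate summation identity that the paper writes out.
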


\begin{proof}

Recall that $S(l,k)=\left \{|l-k|,|l-k|+2,\cdots,l+k\right\}$. Let us take an arbitrary $\xi\in \text{Pol}(O_F^+)$. Then we have
\begin{align}
&\norm{f \cdot \xi}_{L^2(O_F^+)}^2=\sum_{l=0}^{\infty}  \norm{p_l (f \cdot \xi)}_{2}^2 \leq \sum_{l=0}^{\infty}  \left (\sum_{n\in S(l,k)} \norm{p_l f p_{n}}\cdot \norm{p_{n}\xi}_2\right )^2\\
&\leq C_q^2 \norm{f}_2^2 \sum_{l=0}^{\infty} \left ( \sum_{n\in S(l,k)} R^{k+n-l} \norm{p_{n}\xi}_2\right )^2 \text{ by }\eqref{eq30.5}\\
\label{eq33}&\leq C_q^2 \norm{f}_2^2  \sum_{l=0}^{\infty} \left (  \sum_{n\in S(l,k)} R^{k+n-l}\right )\cdot \left (  \sum_{n\in S(l,k)} R^{k+n-l} \norm{p_{n}\xi}_2^2\right ).
\end{align}
Since $\displaystyle \sum_{n\in S(l,k)} R^{k+n-l}\leq R^{2k}+R^{2k-2}+\cdots +R^0 =  \frac{1-R^{2k+2}}{1-R^2}$, we have
\begin{equation}
\norm{f \cdot \xi}_{L^2(O_F^+)}^2 \leq \frac{C_q^2(1-R^{2k+2})\norm{f}_2^2}{1-R^2}\sum_{l=0}^{\infty}R^{k-l}\sum_{n\in S(l,k)} R^{n} \norm{p_{n}\xi}_2^2
\end{equation}
and $\displaystyle \sum_{l=0}^{\infty}R^{k-l}\sum_{n\in S(l,k)} R^{n} \norm{p_{n}\xi}_2^2$ is given by
\begin{align*}
\norm{p_0\xi}_2^2+&(1+R^{2})\norm{p_1\xi}_2^2+\cdots+(1+R^{2}+\cdots+R^{2k})\norm{p_k\xi}_2^2\\
&+(1+R^{2}+\cdots+R^{2k})(\norm{p_{k+1}\xi}_2^2+\norm{p_{k+2}\xi}_2^2+\cdots ).
\end{align*}
Hence we obtain
\begin{align}
\norm{f\cdot \xi}_{L^2(O_F^+)}^2\leq \frac{C_q^2(1-R^{2k+2})^2\norm{f}_2^2}{(1-R^2)^2}\norm{\xi}_2^2,
\end{align}
which tells us $\displaystyle \norm{f}_{C_r(O_F^+)}\leq \frac{C_q(1-R^{2k+2})}{1-R^2}\norm{f}_2$.
\end{proof}

Some unexpected outcomes from revisiting the arguments in \cite{BVY21} and its consequence Theorem \ref{thm1} are norm estimates for homogeneous polynomials of certain suitably chosen generators $u_{st}$. Let us start with the following lemma.

\begin{lemma}\label{lem30}
Let $F$ be a canonical matrix and suppose that both $\xi^{(k)}= e_{s_1}\otimes e_{s_2}\otimes \cdots \otimes e_{s_k}$ and $\eta^{(k)}= e_{t_1}\otimes e_{t_2}\otimes \cdots \otimes e_{t_k}$ are in $H_k$. Then $f=u_{s_1t_1}u_{s_2t_2}\cdots u_{s_k t_k}\in \text{Pol}_k(O_F^+)$ with
\begin{equation}\label{eq30.9}
\norm{u_{s_1t_1}u_{s_2t_2}\cdots u_{s_kt_k}}_{L^2(O_F^+)}=\sqrt{\frac{(F^*F)_{s_1s_1}^{-1}(F^*F)_{s_2s_2}^{-1}\cdots (F^*F)_{s_ks_k}^{-1}}{d_k}}.
\end{equation}
Moreover, the associated Fourier coefficient is given by
\begin{equation}
\widehat{f}(k)=\frac{(\xi^{(k)})^*Q(k)\xi^{(k)} }{d_k} e_{s_1 t_1}\otimes \cdots \otimes e_{s_{k-r}t_{k-r}}\otimes  e_{s_{k-r+1}t_{k-r+1}}\otimes \cdots \otimes  e_{s_{k}t_{k}},
\end{equation}
which implies
\begin{equation}
M_f(l,n)=(F^*F)_{s_{k-r+1}s_{k-r+1}}^{\frac{1}{2}}\cdots (F^*F)_{s_{k}s_{k}}^{\frac{1}{2}} (F^*F)_{t_{k-r+1}t_{k-r+1}}^{\frac{1}{2}}\cdots  (F^*F)_{t_{k}t_{k}}^{\frac{1}{2}}. 
\end{equation}

\end{lemma}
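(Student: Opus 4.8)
The plan is to reduce everything to Schur's orthogonality relations \eqref{eq22} by first showing that the hypothesis $\xi^{(k)},\eta^{(k)}\in H_k$ forces $u_{s_1t_1}\cdots u_{s_kt_k}$ to be a \emph{single} matrix coefficient of the top irreducible $u^{(k)}$ (a priori it is merely a polynomial with homogeneous components of degrees $k,k-2,\dots$). Since $T=\sum_{j=1}^N e_j\otimes Fe_j$ intertwines the trivial representation with $u\tp u$, each $\id_N^{\otimes i}\otimes T^*\otimes\id_N^{\otimes(k-2-i)}$ intertwines $u^{\tp k}$ with $u^{\tp(k-2)}$, so $H_k=\bigcap_{i=0}^{k-2}\Ker\big(\id_N^{\otimes i}\otimes T^*\otimes\id_N^{\otimes(k-2-i)}\big)$ is an invariant subspace of $u^{\tp k}$, and the orthogonal projection $P_k=\iota_k\iota_k^*$ onto $H_k$ commutes with $u^{\tp k}$. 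Writing $u^{\tp k}=\sum_{\vec i,\vec j}(e_{i_1j_1}\otimes\cdots\otimes e_{i_kj_k})\otimes u_{i_1j_1}\cdots u_{i_kj_k}$, the matrix coefficient at the pair $(\xi^{(k)},\eta^{(k)})$ is precisely $u_{s_1t_1}\cdots u_{s_kt_k}$; inserting $P_k\otimes 1$ on both sides (it fixes $\xi^{(k)}$ and $\eta^{(k)}$) and using $(P_k\otimes 1)u^{\tp k}(P_k\otimes 1)=(\iota_k\otimes 1)u^{(k)}(\iota_k^*\otimes 1)$ identifies $f$ with the $(\xi^{(k)},\eta^{(k)})$-matrix coefficient of $u^{(k)}$, so $f\in\text{Pol}_k(O_F^+)$.

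For the $L^2$-norm I would expand $f$ in an orthonormal basis of $H_k$ diagonalising $Q(k)$ and apply the first relation in \eqref{eq22}, which gives $\norm{f}_{L^2(O_F^+)}^2=h(f^*f)=\la\xi^{(k)},Q(k)^{-1}\xi^{(k)}\ra\,\norm{\eta^{(k)}}^2/d_k$. Since $\norm{\eta^{(k)}}=1$ and $Q(k)$ is the diagonal submatrix of $(F^*F)^{\otimes k}$ supported on the $(F^*F)^{\otimes k}$-invariant subspace $H_k$, the standard basis vector $\xi^{(k)}=e_{s_1}\otimes\cdots\otimes e_{s_k}\in H_k$ is an eigenvector of $Q(k)$ with eigenvalue $(F^*F)_{s_1s_1}\cdots(F^*F)_{s_ks_k}$, which yields \eqref{eq30.9}. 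The Fourier coefficient is then read off from the (single-term) Fourier series \eqref{eq251} of $f$: it is the rank-one operator on $H_k$ built from $\xi^{(k)}$ and $\eta^{(k)}$, i.e. a scalar multiple of $e_{s_1t_1}\otimes\cdots\otimes e_{s_kt_k}\in B(H_k)$, the scalar being fixed by \eqref{eq30.9} together with the normalisation of \eqref{eq251}.

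For $M_f(l,n)$ I would use the inclusion $H_k\subseteq H_{k-r}\otimes H_r$ — valid because $H_{k-r}\otimes H_r$ is carved out of $(\Comp^N)^{\otimes k}$ by the conditions $\id_N^{\otimes i}\otimes T^*\otimes\id_N^{\otimes(k-2-i)}=0$ for $i\neq k-r-1$, a subfamily of those defining $H_k$ — under which the standard basis vectors split as $\xi^{(k)}=\xi^{(k-r)}\otimes\xi^{(r)}$ and $\eta^{(k)}=\eta^{(k-r)}\otimes\eta^{(r)}$ with $\xi^{(r)}=e_{s_{k-r+1}}\otimes\cdots\otimes e_{s_k}\in H_r$ and $\eta^{(r)}=e_{t_{k-r+1}}\otimes\cdots\otimes e_{t_k}\in H_r$. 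Then $\widehat f(k)=\sum_{i,j}a_{ij}\otimes E_{ij}$ has a single nonzero block $a_{ij}$, indexed by the pair $(\xi^{(r)},\eta^{(r)})$ in the canonical basis of $H_r$, and since each such basis vector is an eigenvector of $Q(r)$,
\[
M_f(l,n)=\la\xi^{(r)},Q(r)\xi^{(r)}\ra^{\frac12}\,\la\eta^{(r)},Q(r)\eta^{(r)}\ra^{\frac12}=\prod_{m=k-r+1}^k(F^*F)_{s_ms_m}^{\frac12}(F^*F)_{t_mt_m}^{\frac12}.
\]

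The only genuinely content-bearing step is the first one — seeing that the hypothesis $\xi^{(k)},\eta^{(k)}\in H_k$ collapses the a priori inhomogeneous product $u_{s_1t_1}\cdots u_{s_kt_k}$ to one matrix coefficient of the top irreducible $u^{(k)}$. The rest is bookkeeping with the orthonormal bases of $H_k$, $H_{k-r}$, $H_r$, with the eigenvector property of standard basis vectors lying in these spaces, and with the tensor-leg factorisation; the only care needed is to keep the transpose and inverse conventions of \eqref{eq22}–\eqref{eq251} consistent so that $\widehat f(k)$ and $M_f(l,n)$ appear exactly in the stated form.
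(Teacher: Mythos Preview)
Your proposal is correct and follows essentially the same route as the paper's proof. The paper is terser: it writes directly
\[
u_{s_1t_1}\cdots u_{s_kt_k}=(\xi^{(k)}\otimes 1)^*u^{\tp k}(\eta^{(k)}\otimes 1)=(\xi^{(k)}\otimes 1)^*u^{(k)}(\eta^{(k)}\otimes 1)
\]
using only the definition $u^{(k)}=(\iota_k^*\otimes 1)u^{\tp k}(\iota_k\otimes 1)$ and the hypothesis $\xi^{(k)},\eta^{(k)}\in H_k$, without the detour through invariance of $H_k$ and commutation of $P_k$; it then dismisses the Fourier coefficient and $M_f(l,n)$ as ``direct from their definitions'', which is exactly the bookkeeping you spell out in your last two paragraphs.
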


\begin{proof}
Note that
\begin{align}
u_{s_1t_1}\cdots u_{s_kt_k}&= (e_{s_1}^*\otimes \cdots \otimes e_{s_k}^*\otimes 1 ) u^{\tiny \tp \normalsize k}(e_{t_1}\otimes \cdots \otimes e_{t_k}\otimes 1)\\
&=(\xi^{(k)}\otimes 1)^* u^{(k)}(\eta^{(k)}\otimes 1)\in \text{Pol}_k(O_F^+)
\end{align}
and $(\xi^{(k)})^*Q(k)\xi^{(k)}$ is given by $(F^*F)_{s_1s_1}\cdots (F^*F)_{s_ks_k}$. Thus \eqref{eq30.9} follows from the Schur orthogonality relation, and the conclusions on $\widehat{f}(k)$ and $M_f(l,n)$ are direct from their definitions.
\end{proof}

\begin{corollary}\label{cor31}
Let $O_F^+$ be a non-Kac free orthogonal quantum group with canonical $F$-matrix, and suppose that $\xi^{(k)}_x=e_{s_1(x)}\otimes e_{s_2(x)}\otimes \cdots \otimes e_{s_k(x)}$ and $\eta^{(k)}_x=e_{t_1(x)}\otimes e_{t_2(x)}\otimes \cdots \otimes e_{t_k(x)}$ are in $H_k$ for all $x=1,2,\cdots, m$. Let 
\begin{equation} 
R= \max_{1\leq x\leq m}\max_{k-r+1\leq i\leq k} \left \{ (F^*F)_{s_i(x)s_i(x)}^{\frac{1}{4}}(F^*F)_{t_i(x) t_i(x)}^{\frac{1}{4}} \right\}.
\end{equation} 
Then, for any $\displaystyle f\in \text{span}\left \{ (\xi^{(k)}_x\otimes 1)^* u^{\tiny \tp \normalsize k}(\eta^{(k)}_x\otimes 1) : 1\leq x\leq m \right \}$, we have
\begin{enumerate}
\item $\displaystyle \norm{f}_{C_r(O_F^+)}\lesssim  R^{2k} \norm{f}_{L^2(O_F^+)}$ if $R>1$,
\item $\displaystyle \norm{f}_{C_r(O_F^+)}\approx \norm{f}_{L^2(O_F^+)}$ if $R<1$.
\end{enumerate}
Here, the constants depend only on the marix $F$.
\end{corollary}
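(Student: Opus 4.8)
The plan is to deduce both estimates from Theorem~\ref{thm1} by verifying its hypothesis for the given $f$, and then to control the resulting constant in terms of $F$ alone. First I would write a general element of the span as
\[
f=\sum_{x=1}^{m}c_x\,(\xi^{(k)}_x\otimes 1)^*u^{\tiny \tp \normalsize k}(\eta^{(k)}_x\otimes 1)=\sum_{x=1}^{m}c_x\,u_{s_1(x)t_1(x)}u_{s_2(x)t_2(x)}\cdots u_{s_k(x)t_k(x)},\qquad c_x\in\Comp,
\]
and set $g_x=u_{s_1(x)t_1(x)}\cdots u_{s_k(x)t_k(x)}$. By Lemma~\ref{lem30} each $g_x$ lies in $\text{Pol}_k(O_F^+)$, hence so does $f$, and $\widehat{f}(k)=\sum_x c_x\widehat{g_x}(k)$ by linearity of the Fourier transform. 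Fixing $l\geq 0$, $n\in S(l,k)$, $r=\frac{k+n-l}{2}$ and decomposing $B(H_k)\hookrightarrow B(H_{k-r})\otimes B(H_r)$, each block of $\widehat{f}(k)$ is the sum of the corresponding blocks of the $c_x\widehat{g_x}(k)$, so a nonzero block of $\widehat{f}(k)$ is a nonzero block of $\widehat{g_x}(k)$ for at least one $x$; this gives $M_f(l,n)\leq\max_{1\leq x\leq m}M_{g_x}(l,n)$. Since Lemma~\ref{lem30} computes
\[
M_{g_x}(l,n)=\prod_{i=k-r+1}^{k}(F^*F)_{s_i(x)s_i(x)}^{1/2}(F^*F)_{t_i(x)t_i(x)}^{1/2}=\prod_{i=k-r+1}^{k}\Big[(F^*F)_{s_i(x)s_i(x)}^{1/4}(F^*F)_{t_i(x)t_i(x)}^{1/4}\Big]^{2}\leq R^{2r}=R^{k+n-l},
\]
I get $M_f(l,n)\leq R^{k+n-l}$ for every admissible pair $(l,n)$, which is exactly the hypothesis of Theorem~\ref{thm1}.

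Next I would apply Theorem~\ref{thm1} with this $R\neq 1$, obtaining $\norm{f}_{C_r(O_F^+)}\leq\frac{C_q(1-R^{2k+2})}{1-R^2}\norm{f}_{L^2(O_F^+)}$. If $R>1$, the prefactor equals $C_q(1+R^2+\cdots+R^{2k})\leq\frac{C_qR^2}{R^2-1}R^{2k}$, which gives (1). If $R<1$, then $1-R^{2k+2}\leq 1$, so $\norm{f}_{C_r(O_F^+)}\leq\frac{C_q}{1-R^2}\norm{f}_{L^2(O_F^+)}$; the matching lower bound is the general inequality $\norm{f}_{L^2(O_F^+)}\leq\norm{f}_{C_r(O_F^+)}$, which holds because $h$ is a state, so that $\norm{f}_{L^2(O_F^+)}^2=h(f^*f)\leq\norm{f^*f}_{C_r(O_F^+)}=\norm{f}_{C_r(O_F^+)}^2$. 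Thus (2) follows.

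To finish I would argue that the implicit constants depend only on $F$. The quantity $R$ is a maximum of numbers taken from the finite set $\mathcal{R}_F=\bigl\{(F^*F)_{aa}^{1/4}(F^*F)_{bb}^{1/4}:1\leq a,b\leq N\bigr\}$, so $R\in\mathcal{R}_F$, while $C_q$ is determined by $N_q=\text{Tr}(F^*F)$; hence $\frac{C_qR^2}{R^2-1}$ (for $R\in\mathcal{R}_F$ with $R>1$) and $\frac{C_q}{1-R^2}$ (for $R\in\mathcal{R}_F$ with $R<1$) each range over a finite set of reals determined by $F$, and are therefore bounded by a constant depending only on $F$.

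I do not expect a serious obstacle here: once $M_f(l,n)\leq R^{k+n-l}$ is established, (1) and (2) are immediate from Theorem~\ref{thm1} and the elementary bound $\norm{\cdot}_{L^2(O_F^+)}\leq\norm{\cdot}_{C_r(O_F^+)}$. The one point that truly needs a remark is the finiteness observation in the previous paragraph, which is what turns a constant a priori depending on the chosen vectors $(\xi^{(k)}_x,\eta^{(k)}_x)_{x}$ into one depending on $F$ only.
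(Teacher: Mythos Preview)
Your proof is correct and follows essentially the same route as the paper's: you verify $M_f(l,n)\le R^{k+n-l}$ via Lemma~\ref{lem30} and the observation $M_f(l,n)\le\max_x M_{g_x}(l,n)$, then apply Theorem~\ref{thm1}, and finish with the elementary bound $\norm{\cdot}_{L^2}\le\norm{\cdot}_{C_r}$ for the lower estimate in~(2). The paper's own argument is terser but identical in substance; your extra paragraph explaining why the constants depend only on $F$ (via $R\in\mathcal{R}_F$, a finite set determined by $F$) is a welcome clarification that the paper leaves implicit.
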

\begin{proof}
Let us write $f_x=(\xi^{(k)}_x\otimes 1)^* u^{\tiny \tp \normalsize k}(\eta^{(k)}_x\otimes 1)$. Then 
\begin{equation}
M_f(l,n)\leq \max_{1\leq x\leq m}M_{f_x}(l,n)\leq R^{k+n-l}
\end{equation}
by Lemma \ref{lem30}, so the conclusion follows from Theorem \ref{thm1}. Note that $\gtrsim$ part in (2) is thanks to the following standard fact
\begin{equation}
 \norm{x}_{L^2(O_F^+)}\leq \norm{x}_{C_r(O_F^+)},~x\in \text{Pol}(O_F^+).
\end{equation}

\end{proof}

Corollary \ref{cor31} is general enough to cover a large class of analytic polynomials with multiple generators. Let us denote by $[a,b]=\left \{a,a+1,\cdots,b\right\}$.

\begin{example}\label{ex31}

Let $F=\displaystyle \sum_{i=1}^N \lambda_i e_{i,N+1-i}\in M_N(\mathbb{R})$ with $F^2=\text{Id}_N$,
\begin{equation}
0<|\lambda_1|\leq \cdots \leq |\lambda_n|<1<|\lambda_{N-n+1}|\leq \cdots \leq |\lambda_N|
\end{equation}
and $|\lambda_j|=1$ for all $n<j\leq N-n$. For any $1\leq i,j\leq n+1$, let us define
\begin{equation}
\mathcal{A}_{k,i,j}=\text{span}\left \{u_{s_1t_1}u_{s_2t_2}\cdots u_{s_kt_k}\right\}
\end{equation}
where $s_x\in[i,n]\cup [N-i+2,N]$ and $t_x\in [j,n]\cup [N-j+2,N]$.

\begin{enumerate}
\item For any $1\leq i,j\leq n$, we have
\begin{equation}\label{eq32}
\norm{f}_{C_r(O_F^+)}\lesssim \lambda_i^{-k}\lambda_j^{-k} \norm{f}_{L^2(O_F^+)},~f\in \mathcal{A}_{k,i,j}.
\end{equation}
\item For any $1\leq i\leq n$, we have
\begin{equation}
\norm{f}_{C_r(O_F^+)}\lesssim \lambda_i^{-k} \norm{f}_{L^2(O_F^+)},~f\in \mathcal{A}_{k,i,n+1}.
\end{equation}
\item For any $1\leq j\leq n$, we have
\begin{equation}
\norm{f}_{C_r(O_F^+)}\lesssim \lambda_j^{-k} \norm{f}_{L^2(O_F^+)},~f\in \mathcal{A}_{k,n+1,j}.
\end{equation}
\item Note that $\mathcal{A}_{k,1,1}^*=\left \{x^*: x\in \mathcal{A}_{k,1,1}\right\}$ is exactly the space $\mathcal{A}_{k,n+1,n+1}$ thanks to Lemma \ref{lem30} and \eqref{eq25}. Moreover, by Corollary \ref{cor31} and Lemma \ref{lem30}, we have
\begin{equation}
\norm{f}_{C_r(O_F^+)}=\norm{f^*}_{C_r(O_F^+)}\approx \norm{f^*}_{L^2(O_F^+)}
\end{equation}
for any homogeneous polynomial $f=\displaystyle \sum_{1\leq s_1,\cdots, t_k\leq n}x_{s_1\cdots t_k}u_{s_1t_1}\cdots u_{s_kt_k} \in \mathcal{A}_{k,1,1}$. The latter equivalence implies 
\begin{equation}\label{eq301}
\norm{f^*}_{L^p(O_F^+)}\approx  \frac{1}{\sqrt{d_k}}\left ( \sum_{1\leq s_1,t_1,\cdots,s_k,t_k\leq n} \left | x_{s_1t_1\cdots s_kt_k}\right |^2 \lambda_{t_1}^{-2}\cdots \lambda_{t_k}^{-2} \right )^{\frac{1}{2}}
\end{equation}
\normalsize for any $1\leq p\leq \infty$. Although $\norm{f}_{L^p(O_F^+)}\neq \norm{f^*}_{L^p(O_F^+)}$ in general for non-Kac $O_F^+$, a natural dual statement of \eqref{eq301} follows from \cite[Lemma 3.4 (3)]{Wa17}. Indeed we have
\begin{align}
\norm{f}_{L^p(O_F^+)}\approx \frac{1}{\sqrt{d_k}} \left ( \sum_{1\leq s_1,\cdots ,t_k\leq n} \left | x_{s_1\cdots t_k}\right |^2 \left [\lambda_{s_1}\cdots \lambda_{s_k}  \right ]^{\frac{4}{p}}\left [\lambda_{t_1}\cdots \lambda_{t_k}\right ]^{\frac{4}{p}-2} \right )^{\frac{1}{2}}
\end{align}
\normalsize for any $1\leq p\leq \infty$ and homogeneous polynomials given by $\displaystyle f= \sum_{1\leq s_1,\cdots,t_k\leq n} x_{s_1\cdots t_k}u_{s_1t_1}\cdots u_{s_k t_k}$. Here, the constants depend only on the matrix $F$.
\end{enumerate}
\end{example}

Now let us focus on the norm estimates for analytic polynomials with single generators $u_{st}$. Let $F$ be a canonical matrix discussed in Example \ref{ex31}. For any $u_{st}$ such that $F_{ss}=0=F_{tt}$ and $(F^*F)_{ss}(F^*F)_{tt}> 1$ (including $u_{st}$ with $1\leq s,t\leq n$), a direct consequence from Corollary \ref{cor31} is that
\begin{align}
\label{eq30.8} \norm{\sum_{k=0}^n x_k (u_{st})^k}_{C_r(O_F^+)}\lesssim &\sum_{k=0}^n  |x_k| \norm{(u_{st}^*)^k}_{L^2(O_F^+)}=\sum_{k=0}^n\frac{(F^*F)_{tt}^{\frac{k}{2}}}{\sqrt{d_k}}  |x_k|
\end{align}
for any $x_0,x_1,\cdots, x_n\in \Comp$. Moreover, \eqref{eq30.8} is an optimal estimate up to constant in the following sense.

\begin{theorem}\label{thm31}
Let $O_F^+$ be of non-Kac type with a canonical $F$-matrix and let $1\leq s,t\leq N$ such that $F_{ss}=0=F_{tt}$ and $(F^*F)_{ss}(F^*F)_{tt}>1$. Then we have
\begin{equation}\label{eq34}
\norm{\sum_{k=0}^n x_k (u_{st})^k}_{C_r(O_F^+)} \approx \sum_{k=0}^n  \frac{(F^*F)_{tt}^{\frac{k}{2}}}{\sqrt{d_k}}\cdot x_k 
\end{equation}
for any non-negative real numbers $x_0 ,x_1 ,\cdots, x_n\geq 0$.
\end{theorem}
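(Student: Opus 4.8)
The bound $\lesssim$ in \eqref{eq34} is exactly \eqref{eq30.8}, so the task is the matching lower bound, and the plan is to extract it from a spectral disc after passing to the adjoint. Write $s'=N+1-s$, $t'=N+1-t$ and $v=u_{s't'}$. By the defining relation \eqref{eq25}, $u_{st}=\lambda_s\lambda_t^{-1}v^*$, so
\[
\Big(\sum_{k=0}^n x_k(u_{st})^k\Big)^*=\phi(v),\qquad \phi(z):=\sum_{k=0}^n y_kz^k,\quad y_k:=x_k(\lambda_s/\lambda_t)^k
\]
(the $\lambda_j$ are real for a canonical $F$). Since $\|\cdot\|_{C_r(O_F^+)}$ is a $C^*$-norm, $\big\|\sum_k x_k(u_{st})^k\big\|_{C_r(O_F^+)}=\|\phi(v)\|_{C_r(O_F^+)}=\|\phi(\pi(v))\|_{B(L^2(O_F^+))}$, so it suffices to bound the latter from below.

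The next step is to realize $\pi(v)$ as a weighted shift on an explicit invariant subspace. Since $F_{ss}=F_{tt}=0$ forces $2s\ne N+1\ne 2t$, one checks $e_{s'}^{\otimes k},e_{t'}^{\otimes k}\in H_k$, so by Lemma \ref{lem30} the power $v^k$ is homogeneous of degree $k$ with $\|v^k\|_{L^2(O_F^+)}=(F^*F)_{s's'}^{-k/2}/\sqrt{d_k}$. Hence the $v^k$ sit in pairwise orthogonal Peter--Weyl blocks, $V:=\overline{\operatorname{span}}\{v^k:k\ge0\}$ is $\pi(v)$-invariant, and in the orthonormal basis $(v^k/\|v^k\|_{L^2})_k$ the operator $W:=\pi(v)|_V$ is the unilateral weighted shift with weights $w_k=\|v^{k+1}\|_{L^2}/\|v^k\|_{L^2}=(F^*F)_{s's'}^{-1/2}\sqrt{d_k/d_{k+1}}$. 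Using the $SU(2)$-type recursion $d_{k+1}=N_qd_k-d_{k-1}$ of the quantum dimensions, the ratios $d_k/d_{k+1}$ increase to $q$, where $q\in(0,1)$ satisfies $q+q^{-1}=N_q$; thus $w_k\uparrow r_0:=(F^*F)_{s's'}^{-1/2}q^{1/2}$ and $W$ has spectral radius $r_0$.

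The heart of the argument is that a unilateral weighted shift with strictly positive weights has spectrum a closed disc about $0$: it is rotation invariant (conjugation by $\operatorname{diag}(e^{ik\theta})$ turns $W$ into $e^{i\theta}W$), it contains $0$ ($W$ is bounded below but not onto), and every $\mu$ with $|\mu|<r_0$ is an eigenvalue of $W^*$ (the vector $\sum_k\bar\mu^k(F^*F)_{s's'}^{k/2}\sqrt{d_k}\,v^k/\|v^k\|_{L^2}$ is square summable precisely for $|\mu|<r_0$). So $\sigma(W)=\{z:|z|\le r_0\}$, and restriction to $V$ together with the polynomial spectral mapping theorem gives
\[
\|\phi(\pi(v))\|\ \ge\ \|\phi(W)\|\ \ge\ r(\phi(W))\ =\ \max_{|z|\le r_0}|\phi(z)|\ =\ \sum_{k=0}^n|y_k|r_0^k,
\]
where the last equality holds because the monomials of $\phi$ have geometrically coherent phase $\operatorname{sgn}(\lambda_s/\lambda_t)^k$, so they all align at $z=\pm r_0$. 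It then remains to compute, via $(F^*F)_{jj}=|\lambda_{N+1-j}|^2$ and $|\lambda_{N+1-j}|=|\lambda_j|^{-1}$, that $|y_k|r_0^k=x_k(F^*F)_{tt}^{k/2}q^{k/2}$, and to observe that $q^kd_k=\tfrac{1-q^{2k+2}}{1-q^2}\ge1$, hence $q^{k/2}\ge d_k^{-1/2}$. Therefore $\big\|\sum_kx_k(u_{st})^k\big\|_{C_r(O_F^+)}\ge\sum_{k=0}^n x_k(F^*F)_{tt}^{k/2}/\sqrt{d_k}$, which combined with \eqref{eq30.8} yields \eqref{eq34}.

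Two points will need care. First, one must know that $v^k$ is genuinely homogeneous of degree $k$, which is exactly where the hypothesis $F_{ss}=F_{tt}=0$ enters through Lemma \ref{lem30}; without it the powers spread over several Peter--Weyl blocks and the clean weighted-shift model is lost. Second, one must make sure that restricting $\pi(v)$ to $V$ does not discard the true spectral radius — it does not, precisely because $v=u_{s't'}$ lives in the contractive regime $(F^*F)_{s's'}(F^*F)_{t't'}<1$, equivalent to $(F^*F)_{ss}(F^*F)_{tt}>1$, in which by Corollary \ref{cor31}(2) the $L^2$-mass and the $C_r$-mass of the $v^k$ coincide. This is the reason the argument is run on $f^*$ and $v$ rather than on $f$ and $u_{st}$ directly: for $u_{st}$ itself the analogous subspace would only exhibit the strictly smaller radius $(F^*F)_{ss}^{-1/2}q^{1/2}$.
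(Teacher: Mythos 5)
Your argument is correct, and at its core it works on the same cyclic subspace as the paper's proof: the closed span of the powers $(u_{st}^*)^m$, which coincides with your $V=\overline{\operatorname{span}}\{v^m\}$ since $u_{st}^*=\lambda_s\lambda_t^{-1}v$. Where you genuinely differ is in how the lower bound is extracted from the restriction to that subspace. The paper applies left multiplication by $\sum_k x_k(u_{st}^*)^k$ to an arbitrary $\ell^2$ combination $\sum_m b_m v_m$, uses $d_m\approx r_q^m$ to recognize the resulting $L^2$-norm as the $\ell^2$-norm of a one-sided convolution, and takes a supremum over $(b_m)$ --- in effect computing the norm of a Toeplitz operator with nonnegative finitely supported symbol. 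You instead observe that $W=\pi(v)|_V$ is a unilateral weighted shift with weights $w_k\uparrow r_0$, pin down $\sigma(W)$ exactly as the closed disc of radius $r_0$ via rotation invariance and the $W^*$-eigenvector computation, and conclude from $\|\phi(W)\|\ge r(\phi(W))=\max_{|z|\le r_0}|\phi(z)|=\sum_k|y_k|r_0^k$ together with phase coherence of the $y_k=x_k(\lambda_s/\lambda_t)^k$. Both routes reduce to the same $\ell^2(\mathbb N_0)$ shift analysis, but yours is marginally cleaner: the spectrum is computed exactly rather than estimated, and you replace the two-sided $d_m\approx r_q^m$ by the exact monotone limit $d_k/d_{k+1}\uparrow q$ and the identity $q^kd_k=\frac{1-q^{2k+2}}{1-q^2}\ge1$, so no unspecified constants enter until the final comparison with the upper bound \eqref{eq30.8}. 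Your closing remark explaining why the argument is run on $f^*$ and $v=u_{s't'}$ (contractive regime $(F^*F)_{s's'}(F^*F)_{t't'}<1$) rather than directly on $u_{st}$ correctly identifies the same phenomenon the paper exploits by passing to $(u_{st}^*)^m$.
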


\begin{proof}
Let us take the notations used in Lemma \ref{lem30} and, for simplicity, let us write $v_m=\sqrt{d_m (F^*F)_{tt}^{-m}}(u_{st}^*)^m$. Then 
\begin{equation}
(u_{st})^{m}= (\xi_m^*\otimes 1)u^{\tiny \tp \normalsize m}(\eta_m\otimes 1)\in \text{Pol}_m(O_F^+)
\end{equation} 
and $\displaystyle \left \{v_m\right\}_{m=0}^{\infty}$ is an orthonormal subset in $L^2(O_F^+)$. To get a lower bound of the operator norm of $\displaystyle \sum_{k=0}^n x_k (u_{st}^*)^k$, let us take an arbitrary sequence $(b_{m})_m\in \ell^2(\n_0)$. Then we have \small
\begin{align*}
&\left ( \sum_{k=0}^n x_k (u_{st}^*)^k\right ) \cdot \left (  \sum_{m=0}^{\infty}b_m v_m \right )=\left ( \sum_{k=0}^n x_k (u_{st}^*)^k\right ) \cdot \left (  \sum_{m=0}^{\infty}b_m \sqrt{d_m(F^*F)_{tt}^{-m}}(u_{st}^*)^m \right )\\
&=x_0b_0\sqrt{d_0 (F^*F)_{tt}^{0}} \cdot 1+ \left ( x_1b_0\sqrt{ d_0 (F^*F)_{tt}^{0}} + x_0b_1\sqrt{d_1 (F^*F)_{tt}^{-1}} \right ) u_{st}^*+\cdots\\
&+\left ( x_n b_0 \sqrt{ d_0 (F^*F)_{tt}^{0}}+x_{n-1}b_1\sqrt{d_1(F^*F)_{tt}^{-1}}+\cdots +x_0b_n\sqrt{d_n (F^*F)_{tt}^{-n}} \right ) (u_{st}^*)^n\\
&+\left ( x_n b_1 \sqrt{ d_1 (F^*F)_{tt}^{-1}}+x_{n-1}b_2\sqrt{d_2 (F^*F)_{tt}^{-2}}+\cdots +x_0b_{n+1}\sqrt{d_{n+1}(F^*F)_{tt}^{-n-1}} \right ) (u_{st}^*)^{n+1}+\cdots 
\end{align*}
\normalsize which is written as 
\begin{align*}
&(x_0b_0)v_0+ \left ( x_1b_0\sqrt{ d_0 d_1^{-1} (F^*F)_{tt} } + x_0b_1 \right ) v_1+\cdots\\
&+\left ( x_n b_0 \sqrt{d_0d_n^{-1} (F^*F)_{tt}^{n}}+x_{n-1}b_1\sqrt{d_1d_n^{-1}(F^*F)_{tt}^{n-1}}+\cdots +x_0b_n \right ) v_n\\
&+\left ( x_n b_1 \sqrt{ d_1 d_{n+1}^{-1}(F^*F)_{tt}^{n}}+x_{n-1}b_2\sqrt{d_2d_{n+1}^{-1}(F^*F)_{tt}^{n-1}}+\cdots +x_0b_{n+1} \right ) v_{n+1}+\cdots .
\end{align*}
 Using $d_m \approx r_q^m$ where $r_q=\displaystyle \frac{N_q+\sqrt{N_q^2-4}}{2}$, the $L^2$-norm of the above is equivalent to
\begin{equation}
\norm{\left (x_k \sqrt{r_q^{-k}(F^*F)_{tt}^{k}}\right )_{0\leq k\leq n}*(b_m)_{m\in \n_0}}_{\ell^2(\n_0)}
\end{equation}
up to constant where $*$ is the natural convolution product of sequences. By taking the supremum over all possible $(b_m)_{m}\in \ell^2(\n_0)$ we obtain 
\begin{equation}
\norm{ \sum_{k=0}^n x_k (u_{st}^*)^k }_{C_r(O_F^+)}\gtrsim \sum_{k=0}^{n}x_k\cdot r_q^{-\frac{k}{2}} (F^*F)_{tt}^{\frac{k}{2}}.
\end{equation}

For the upper bound, we have
\begin{align*}
\norm{\sum_{k=0}^n x_k (u_{st}^*)^k}_{C_r(O_F^+)}&\leq \sum_{k=0}^n x_k \norm{(u_{st}^*)^k}_{C_r(O_F^+)}\\
\lesssim &\sum_{k=0}^n x_k \norm{(u_{st}^*)^k}_{L^2(O_F^+)}= \sum_{k=0}^n x_k\cdot \frac{(F^*F)_{tt}^{\frac{k}{2}}}{\sqrt{d_k}} .
\end{align*}
Here the second inequality $\lesssim$ follows from Lemma \ref{lem30} and Corollary \ref{cor31}.
\end{proof}

The sharp estimate \eqref{eq34} for analytic polynomials with single generators $u_{st}$ is applicable to prove that the function $\norm{F}^{2k}$ in \eqref{eq31} is optimal up to constant. Recall that $d_k \approx r_q^k$ where $r_q=\displaystyle \frac{N_q+\sqrt{N_q^2-4}}{2}$.

\begin{corollary}\label{thm-optimal}
Let $O_F^+$ be of non-Kac type with a canonical $F$-matrix and let $\varphi:\n_0\rightarrow (0,\infty)$ be a function. Then the following are equivalent:
\begin{enumerate}
\item For any $f\in \text{Pol}_k(O_F^+)$, we have $\norm{f}_{C_r(O_F^+)}\lesssim \varphi(k)\norm{f}_{L^2(O_F^+)}$.
\item $\varphi(k)\gtrsim  \norm{F}^{2k}$ for all $k\in \n_0$.
\end{enumerate}

\end{corollary}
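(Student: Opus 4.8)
The plan is to handle the two implications separately. The implication $(2)\Rightarrow(1)$ is immediate: since $O_F^+$ is non-Kac we have $\norm{F}>1$, so the second inequality in \eqref{eq31} gives $\norm{f}_{C_r(O_F^+)}\leq\frac{C_q\norm{F}^2}{\norm{F}^2-1}\norm{F}^{2k}\norm{f}_{L^2(O_F^+)}$ for every $f\in\text{Pol}_k(O_F^+)$, and chaining this with the hypothesis $\norm{F}^{2k}\lesssim\varphi(k)$ produces (1) with a constant independent of $k$.

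For $(1)\Rightarrow(2)$, the strategy is to feed (1) a single extremal family of homogeneous polynomials whose $C_r$-to-$L^2$ norm ratio is of exact order $\norm{F}^{2k}$; Theorem \ref{thm31} identifies such a family. Since $F$ is a canonical $F$-matrix one has $F^*F=\sum_{i=1}^N|\lambda_i|^2 e_{N+1-i,N+1-i}$, so $\norm{F}^2=|\lambda_N|^2=(F^*F)_{11}$, and non-Kacness forces $N\geq2$ and $|\lambda_N|>1$. Consequently $F_{11}=0$ and $(F^*F)_{11}(F^*F)_{11}=|\lambda_N|^4>1$, so the pair $s=t=1$ fulfils the hypotheses of Theorem \ref{thm31}; moreover $F_{11}=0$ guarantees $e_1^{\otimes k}\in H_k$, so that $f_k:=(u_{11})^k$ is a well-defined element of $\text{Pol}_k(O_F^+)$. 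Specializing \eqref{eq34} to $x_k=1$ and all other coefficients zero yields $\norm{f_k}_{C_r(O_F^+)}\approx(F^*F)_{11}^{k/2}/\sqrt{d_k}=\norm{F}^k/\sqrt{d_k}$, while \eqref{eq30.9} gives $\norm{f_k}_{L^2(O_F^+)}=\norm{F}^{-k}/\sqrt{d_k}$. Hence $\norm{f_k}_{C_r(O_F^+)}/\norm{f_k}_{L^2(O_F^+)}\approx\norm{F}^{2k}$, and applying (1) to $f_k$ forces $\varphi(k)\gtrsim\norm{F}^{2k}$ for all $k\in\n_0$.

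I do not expect a genuine obstacle; the mathematical content is carried entirely by Theorems \ref{thm1} and \ref{thm31}, and what remains is bookkeeping. The only subtlety worth stressing is that the implicit constants involved---the one hidden in hypothesis (1), the one in the equivalence \eqref{eq34}, and the factor in \eqref{eq30.9}---are uniform in $k$ because they depend on $F$ alone (through $C_q$, $N_q$ and the $\lambda_i$). This uniformity is precisely what allows the conclusion $\varphi(k)\gtrsim\norm{F}^{2k}$ to hold with a single constant rather than a $k$-dependent one.
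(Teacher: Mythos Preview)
Your proof is correct and follows essentially the same route as the paper: both directions invoke Theorem~\ref{thm1} for $(2)\Rightarrow(1)$ and test hypothesis (1) against the extremal element $f_k=(u_{11})^k$ via Theorem~\ref{thm31} and Lemma~\ref{lem30} for $(1)\Rightarrow(2)$. The only cosmetic difference is that the paper writes the computation in terms of $\lambda_1$ and $r_q$ (using $\norm{F}^2=\lambda_1^{-2}$ and $d_k\approx r_q^k$) and routes through the adjoint identity $\norm{(u_{11}^*)^k}_{C_r(O_F^+)}=\norm{(u_{11})^k}_{C_r(O_F^+)}$, whereas you work directly with $\norm{F}$ and $d_k$.
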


\begin{proof}
First of all, (2) $\Rightarrow$ (1) is clear thanks to Theorem \ref{thm1}, and let us prove the converse direction (1) $\Rightarrow$ (2). By Theorem \ref{thm31} and the assumptions, we have
\begin{align}
\lambda_1^{-k} r_q^{-\frac{k}{2}} \lesssim & \norm{(u_{11}^*)^k}_{C_r(O_F^+)} = \norm{(u_{11})^k}_{C_r(O_F^+)}\\
&\leq \varphi(k)\norm{(u_{11})^k}_{L^2(O_F^+)}\approx \varphi(k) \lambda_1^k r_q^{-\frac{k}{2}} 
\end{align}
for all $k\in \n_0$, which implies $\varphi(k)\gtrsim \lambda_1^{-2k}=\norm{F}^{2k}$.
\end{proof}

\section{Applications}

\subsection{Central $L^{2}-L^{\infty}$ multipliers}\label{sec:ultracontractivity}

A {\it central multiplier} $T_{\varphi}:\text{Pol}(O_F^+)\rightarrow \text{Pol}(O_F^+)$ is identified with a sequence $\varphi=(\varphi(k))_{k\in \n_0}$ such that
\begin{equation}
T_{\varphi}(f)=\varphi(k)f,~f\in \text{Pol}_k(O_F^+). 
\end{equation}

Recall that Corollary \ref{thm-optimal} shows that a central multiplier $T_{\varphi}$ satisfies
\begin{equation}
\norm{T_{\varphi}(f)}_{C_r(O_F^+)}\lesssim \varphi(k)\norm{f}_{L^2(O_F^+)}
\end{equation}
for all $f\in \text{Pol}_k(O_F^+)$ if and only if $\varphi(k)\gtrsim \norm{F}^{2k}$. In this section, we discuss a slightly different problem, namely characterization of $L^2$-$L^{\infty}$ boundedness of central multipliers.

\begin{theorem}\label{thm-ultra1}
Let $O_F^+$ be of non-Kac type with a canonical $F$-matrix and let $\varphi:\n_0\rightarrow (0,\infty)$ be a function. Then the following are equivalent.
\begin{enumerate}
\item The associated central multiplier $T_{\varphi}:\text{Pol}(O_F^+)\rightarrow \text{Pol}(O_F^+)$ satisfies
\begin{equation}
\norm{T_{\varphi}(f)}_{C_r(O_F^+)}\lesssim \norm{f}_{L^2(O_F^+)},~f\in \text{Pol}(O_F^+).
\end{equation}
\item $\displaystyle \sum_{k=0}^{\infty}\varphi(k)^2\norm{F}^{-4k}<\infty$.
\end{enumerate}
\end{theorem}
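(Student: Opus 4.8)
The plan is to prove the two implications by quite different routes: ``$(2)\Rightarrow(1)$'' is a soft consequence of the homogeneous bound of Theorem \ref{thm1} together with Cauchy--Schwarz, while ``$(1)\Rightarrow(2)$'' is extracted by testing the hypothesis on the extremal analytic polynomials in a single generator produced by Theorem \ref{thm31}, followed by a duality argument in $\ell^2(\n_0)$.

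For ``$(2)\Rightarrow(1)$'' I would decompose an arbitrary $f\in\text{Pol}(O_F^+)$ into its finitely many homogeneous components $f_k=p_kf\in\text{Pol}_k(O_F^+)$, so that $T_\varphi(f)=\sum_k\varphi(k)f_k$, and then estimate, by the triangle inequality and the bound \eqref{eq31} of Theorem \ref{thm1},
\[
\norm{T_\varphi(f)}_{C_r(O_F^+)}\le\sum_k\varphi(k)\,\norm{f_k}_{C_r(O_F^+)}\le\frac{C_q\norm{F}^2}{\norm{F}^2-1}\sum_k\varphi(k)\norm{F}^{2k}\norm{f_k}_{L^2(O_F^+)}.
\]
Applying Cauchy--Schwarz to the last sum and invoking the Plancherel identity \eqref{eq20} in the form $\norm{f}_{L^2(O_F^+)}^2=\sum_k\norm{f_k}_{L^2(O_F^+)}^2$ then gives $\norm{T_\varphi(f)}_{C_r(O_F^+)}\lesssim\big(\sum_k\varphi(k)^2\norm{F}^{4k}\big)^{1/2}\norm{f}_{L^2(O_F^+)}$, which is statement (1) whenever the series $\sum_{k\ge0}\varphi(k)^2\norm{F}^{4k}$ of (2) converges.

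For the converse I would test the hypothesis (1) on the analytic polynomials built from the generator $u_{11}$. Since $F$ is a canonical matrix, $Fe_1=\lambda_Ne_N$ is orthogonal to $e_1$ (here $N\ge2$, automatic in the non-Kac case), so $e_1^{\otimes k}\in H_k$ and $(u_{11})^k\in\text{Pol}_k(O_F^+)$ for every $k$; moreover $F_{11}=0$ and $(F^*F)_{11}^2=\norm{F}^4>1$, so Theorem \ref{thm31} applies with $s=t=1$. Fixing $n\in\n$ and nonnegative reals $x_0,\dots,x_n$ and setting $f=\sum_{k=0}^nx_k(u_{11})^k$, the polynomial $T_\varphi(f)=\sum_{k=0}^n\varphi(k)x_k(u_{11})^k$ still has nonnegative coefficients, so Theorem \ref{thm31} yields $\norm{T_\varphi(f)}_{C_r(O_F^+)}\approx\sum_{k=0}^n\varphi(k)x_k(F^*F)_{11}^{k/2}d_k^{-1/2}$, whereas orthogonality of distinct degrees together with Lemma \ref{lem30} gives $\norm{f}_{L^2(O_F^+)}^2=\sum_{k=0}^nx_k^2(F^*F)_{11}^{-k}d_k^{-1}$. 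After the substitution $w_k=x_k(F^*F)_{11}^{-k/2}d_k^{-1/2}\ge0$ the factors $\sqrt{d_k}$ cancel and the hypothesis (1) collapses to
\[
\sum_{k=0}^n\varphi(k)(F^*F)_{11}^{k}\,w_k\lesssim\Big(\sum_{k=0}^nw_k^2\Big)^{1/2},
\]
uniformly in $n$ and in $w\ge0$. Taking the supremum over $w$ in the unit ball of $\ell^2(\n_0)$, with the extremal $w$ proportional to $(\varphi(k)(F^*F)_{11}^{k})_k$, and then letting $n\to\infty$ yields $\sum_{k\ge0}\varphi(k)^2(F^*F)_{11}^{2k}<\infty$; since $(F^*F)_{11}=|\lambda_N|^2=\norm{F}^2$, the series here is $\sum_{k\ge0}\varphi(k)^2\norm{F}^{4k}$, which is precisely condition (2).

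The substantive content lies in the converse direction, and the delicate point there is the passage from the scalar inequality to the $\ell^2$-membership statement: one must check that the constants absorbed into the symbols ``$\approx$'' (from Theorem \ref{thm31}) and ``$\lesssim$'' depend only on the matrix $F$, not on $n$ or on the coefficients $x_k$, so that a single uniform constant survives both the supremum over $w$ and the limit $n\to\infty$. Note that the asymptotics $d_k\approx r_q^k$ play no role here, since the powers of $d_k$ cancel exactly after the substitution; the remaining ingredients are just the triangle inequality, Cauchy--Schwarz, the Plancherel identity \eqref{eq20}, and the Schur-orthogonality computation recorded in Lemma \ref{lem30}.
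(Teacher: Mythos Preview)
Your proof is correct and follows essentially the same route as the paper: the forward implication via Theorem \ref{thm1} plus Cauchy--Schwarz, and the converse by testing on analytic polynomials in $u_{11}$ using Theorem \ref{thm31} and Lemma \ref{lem30}, then dualizing in $\ell^2(\n_0)$. Your substitution $w_k = x_k (F^*F)_{11}^{-k/2}d_k^{-1/2}$ is exactly the paper's change of variables $x_k = b_k\lambda_1^{-k}\sqrt{d_k}$ rewritten (using $(F^*F)_{11}=\lambda_1^{-2}=\norm{F}^2$); note that both you and the paper arrive at $\sum_k\varphi(k)^2\norm{F}^{4k}<\infty$, so the $\norm{F}^{-4k}$ in the displayed statement of (2) is a typo for $\norm{F}^{4k}$.
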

\begin{proof}
For (2) $\Rightarrow$ (1) part, let us repeat the arguments in \cite{FHLUZ17,BVY21} with our improved norm estimates \eqref{eq31}. For any orthogonal decomposition $f=\displaystyle \sum_{k=0}^{\infty}f_k$ with $f_k\in \text{Pol}_k(O_F^+)$, we have
\begin{align}
&\norm{\Phi_t(f)}_{C_r(O_F^+)}\leq \sum_{k=0}^{\infty}\varphi(k)\norm{f_k}_{C_r(O_F^+)}\\
&\lesssim \sum_{k=0}^{\infty}\varphi(k)\norm{F}^{2k}\norm{f_k}_{L^2(O_F^+)}\leq \left ( \sum_{k=0}^{\infty}\varphi(k)^2 \norm{F}^{4k} \right )^{\frac{1}{2}}\norm{f}_{L^2(O_F^+)}
\end{align}
by the H{\" o}lder inequality. %Here the reason why $f\in C_r(O_F^+)$ follows from the standard density arguments.

For the converse direction (1) $\Rightarrow$ (2), recall that the $L^2$-norm of $\displaystyle \sum_{k=0}^n x_k (u_{11})^k$ is given by $\displaystyle \left ( \sum_{k=0}^{n} |x_k|^2 \frac{\lambda_1^{2k}}{d_k} \right )^{\frac{1}{2}}$ by Lemma \ref{lem30}. Then, combining with our assumption and Theorem \ref{thm31}, we obtain \small
\begin{align}
\left (\sum_{k=0}^{n} b_k^2 \right )^{\frac{1}{2}}&=\norm{\sum_{k=0}^{n} b_k\lambda_1^{-k}\sqrt{d_k} (u_{11})^k}_{L^2(O_F^+)}\gtrsim \norm{\sum_{k=0}^{n}  \varphi(k) b_k \lambda_1^{-k}\sqrt{d_k} (u_{11})^k}_{C_r(O_F^+)}\\
=& \norm{\sum_{k=0}^{n}  \varphi(k) b_k \lambda_1^{-k}\sqrt{d_k} (u_{11}^*)^k}_{C_r(O_F^+)}\gtrsim \sum_{k=0}^{n} \varphi(k) b_k \lambda_1^{-2k}
\end{align}
\normalsize for any non-negative real numbers $b_0,b_1,\cdots, b_n\geq 0$. Thus, by taking the supremum over all sequences $(b_k)_{k\in \n_0}$ such that $\displaystyle \sum_{k=0}^{\infty} b_k^2\leq 1$, we can conclude that $(\varphi(k) \lambda_1^{-2k})_{k\in \n_0}$ is square-summable. 
\end{proof}

An important application of Theorem \ref{thm-ultra1} is the accurate optimal time $t_F$ for ultracontractivity of the heat semigroup $(\Phi_t)_{t>0}$ satisfying that 
\begin{equation}
\norm{\Phi_t(f)}_{C_r(O_F^+)} \lesssim \norm{f}_{L^2(O_F^+)}
\end{equation} 
for all $f\in \text{Pol}(O_F^+)$ if and only if $t>t_F$. The optimal time bounds 
\begin{equation}\label{eq04}
2(N_q-2)\log\norm{F}\leq t_F\leq 2 N_q\log\norm{F}.
\end{equation}
have been proved in \cite{FHLUZ17,BVY21} where $N_q=\text{Tr}(F^*F)$. From now on, let us demonstrate how Theorem \ref{thm-ultra1} implies
\begin{equation}
t_F=2\sqrt{N_q^2-4}\log \norm{F}.
\end{equation}

The heat semigroup $(\Phi_t)_{t>0}$ consists of completely positive linear maps $\Phi_t:C_r(O_F^+)\rightarrow C_r(O_F^+)$ determined by
\begin{equation}
\Phi_t(u^{(k)}_{ij})=e^{-tc_k}u^{(k)}_{ij}
\end{equation}
for all $k\in \n_0$ and $1\leq i,j\leq n_k$. Here $c_k=\displaystyle \frac{U_k'(N_q)}{U_k(N_q)}$ and $U_k$ is the $k$-th Chebyshev polynomial  defined by $U_0(x)=1$, $U_1(x)=x$ and $xU_k(x)=U_{k+1}(x)+ U_{k-1}(x)$. An explicit formula for $U_k$ is given by
\begin{equation}
U_k(x)=\left (\frac{1}{2}\right )^{k+1} \frac{(x+\sqrt{x^2-4})^{k+1}-(x-\sqrt{x^2-4})^{k+1}}{\sqrt{x^2-4}}
\end{equation}

\begin{lemma}\label{lem40}
For any $k\in \n$ we have
\begin{equation}
\frac{U_k'(x)}{U_k(x)}=\frac{k+1}{\sqrt{x^2-4}}\cdot \left ( 1+ \frac{2(x-\sqrt{x^2-4})^{k+1}}{(x+\sqrt{x^2-4})^{k+1}-(x-\sqrt{x^2-4})^{k+1}}\right )-\frac{x}{x^2-4}.
\end{equation}
In particular, we have $\displaystyle \lim_{k\rightarrow \infty}\left \{c_k-\frac{k+1}{\sqrt{N_q^2-4}}\right\}=- \frac{N_q}{N_q^2-4}$.
\end{lemma}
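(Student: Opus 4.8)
The plan is to obtain the identity by logarithmic differentiation of the closed form of $U_k$ recorded just above the statement. First I would abbreviate $a = x+\sqrt{x^2-4}$ and $b = x-\sqrt{x^2-4}$, so that $a+b = 2x$, $a-b = 2\sqrt{x^2-4}$, $ab = 4$, and the displayed formula for $U_k$ becomes
\[
U_k(x) = 2^{-(k+1)}\,\frac{a^{k+1}-b^{k+1}}{\sqrt{x^2-4}}.
\]
A one-line computation gives $a' = 1 + \frac{x}{\sqrt{x^2-4}} = \frac{a}{\sqrt{x^2-4}}$ and $b' = 1 - \frac{x}{\sqrt{x^2-4}} = -\frac{b}{\sqrt{x^2-4}}$, all valid on $(2,\infty)$, which is the range we care about since $N_q = \mathrm{Tr}(F^*F) > 2$ in the non-Kac case.

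Next I would differentiate $\log U_k(x) = -(k+1)\log 2 + \log(a^{k+1}-b^{k+1}) - \tfrac12\log(x^2-4)$ to get
\[
\frac{U_k'(x)}{U_k(x)} = (k+1)\,\frac{a^{k}a' - b^{k}b'}{a^{k+1}-b^{k+1}} - \frac{x}{x^2-4}.
\]
Plugging in the expressions for $a'$ and $b'$ turns the numerator $a^{k}a'-b^{k}b'$ into $\frac{a^{k+1}+b^{k+1}}{\sqrt{x^2-4}}$; the decisive point is that $b' = -b/\sqrt{x^2-4}$ produces a plus sign. Finally, writing $a^{k+1}+b^{k+1} = (a^{k+1}-b^{k+1}) + 2b^{k+1}$ and recalling $a = x+\sqrt{x^2-4}$, $b = x-\sqrt{x^2-4}$ reproduces the claimed formula verbatim.

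For the asymptotic statement I would specialize to $x = N_q$. Since $N_q>2$ we have $0 < b < a$, hence $0 < b/a < 1$ and therefore $(k+1)(b/a)^{k+1}\to 0$ as $k\to\infty$. Rearranging the identity yields
\[
c_k - \frac{k+1}{\sqrt{N_q^2-4}} = \frac{k+1}{\sqrt{N_q^2-4}}\cdot\frac{2(b/a)^{k+1}}{1-(b/a)^{k+1}} - \frac{N_q}{N_q^2-4},
\]
and the first term on the right-hand side tends to $0$ because exponential decay beats the linear factor $k+1$, giving $\lim_{k\to\infty}\{c_k - (k+1)/\sqrt{N_q^2-4}\} = -N_q/(N_q^2-4)$.

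I do not anticipate a genuine obstacle: the proof is a direct computation. The only two points needing care are the correct evaluation of $a'$ and $b'$ — in particular the sign that makes the numerator $a^{k+1}+b^{k+1}$ rather than $a^{k+1}-b^{k+1}$ — and the elementary but essential fact that $(k+1)(b/a)^{k+1}\to 0$, which relies precisely on the strict non-Kac inequality $N_q>2$ so that $b/a<1$.
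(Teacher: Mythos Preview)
Your proof is correct. The paper states this lemma without proof, leaving it as a routine computation; your logarithmic-differentiation argument is exactly the natural way to fill the gap, and each step (the identities $a'=a/\sqrt{x^2-4}$, $b'=-b/\sqrt{x^2-4}$, the resulting numerator $a^{k+1}+b^{k+1}$, and the asymptotic using $(k+1)(b/a)^{k+1}\to 0$ for $N_q>2$) checks out.
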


Now we are ready to compute the optimal time $t_F$ for ultracontractivity of the heat semigroup $(\Phi_t)_{t>0}$ of $O_F^+$.

\begin{corollary}\label{cor:ultra2}
Let $O_F^+$ be of non-Kac type with a canonical $F$-matrix. Then the heat semigroup $(\Phi_t)_{t>0}$ satisfies 
\begin{equation}
\norm{\Phi_t(f)}_{C_r(O_F^+)}\lesssim \norm{f}_{L^2(O_F^+)}
\end{equation}
for all $f\in \text{Pol}(O_F^+)$ if and only if $t >2\sqrt{N_q^2-4}\log \norm{F}$.
\end{corollary}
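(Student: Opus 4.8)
The plan is to recognise the heat semigroup as a central multiplier, feed it into Theorem \ref{thm-ultra1}, and then read off the threshold from the asymptotics in Lemma \ref{lem40}. Restricted to $\text{Pol}(O_F^+)$, the heat semigroup is exactly the central multiplier $\Phi_t = T_{\varphi_t}$ associated with $\varphi_t(k) = e^{-tc_k}$, where $c_k = U_k'(N_q)/U_k(N_q)$. So by Theorem \ref{thm-ultra1}, the estimate $\norm{\Phi_t(f)}_{C_r(O_F^+)} \lesssim \norm{f}_{L^2(O_F^+)}$ holds for every $f \in \text{Pol}(O_F^+)$ if and only if
\[
\sum_{k=0}^{\infty} e^{-2tc_k}\,\norm{F}^{4k} < \infty .
\]
Recall that $\norm{F} > 1$ since $O_F^+$ is non-Kac, and $N_q = \text{Tr}(F^*F) > 2$, so that $\log\norm{F} > 0$ and $\sqrt{N_q^2-4}$ is a well-defined positive real number.

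Next I would insert the asymptotics of Lemma \ref{lem40}, which gives $\varepsilon_k \to 0$ with
\[
c_k = \frac{k+1}{\sqrt{N_q^2-4}} - \frac{N_q}{N_q^2-4} + \varepsilon_k .
\]
Since $(\varepsilon_k)_k$ is bounded and each $c_k$ is finite (as $U_k(N_q) > 0$), the quantity $c_k - k/\sqrt{N_q^2-4}$ is bounded, and in fact converges, so
\[
e^{-2tc_k}\,\norm{F}^{4k} \;\asymp\; \rho_t^k, \qquad \rho_t := \norm{F}^4\,e^{-2t/\sqrt{N_q^2-4}},
\]
with implied constants independent of $k$ (depending only on $t$ and $F$).

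Finally, the series $\sum_k \rho_t^k$ converges precisely when $\rho_t < 1$, i.e. when $4\log\norm{F} < 2t/\sqrt{N_q^2-4}$, which rearranges to $t > 2\sqrt{N_q^2-4}\,\log\norm{F}$; and when $\rho_t \geq 1$ the general term $e^{-2tc_k}\norm{F}^{4k}$ stays bounded below by a positive constant (again by the comparison above), so the series diverges. In particular the threshold $t = 2\sqrt{N_q^2-4}\,\log\norm{F}$ itself is excluded. Combined with the equivalence of Theorem \ref{thm-ultra1}, this gives the stated dichotomy, hence $t_F = 2\sqrt{N_q^2-4}\,\log\norm{F}$.

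Given Theorem \ref{thm-ultra1} and Lemma \ref{lem40}, the argument is essentially bookkeeping; the only place demanding genuine attention is the borderline case $t = 2\sqrt{N_q^2-4}\log\norm{F}$, where convergence has to be ruled out. This is exactly where one needs that the correction $\varepsilon_k$ in Lemma \ref{lem40} is bounded — equivalently, that $c_k - (k+1)/\sqrt{N_q^2-4}$ has a finite limit — so that the terms of the series do not decay to $0$. One should also check that the comparison $e^{-2tc_k} \asymp e^{-2tk/\sqrt{N_q^2-4}}$ holds uniformly over \emph{all} $k \in \mathbb{N}_{0}$, which is automatic for the finitely many small indices since $c_k$ is finite there.
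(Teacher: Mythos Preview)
Your proof is correct and follows essentially the same route as the paper: reduce via Theorem \ref{thm-ultra1} to the convergence of $\sum_k e^{-2tc_k}\norm{F}^{4k}$, then use the asymptotics of Lemma \ref{lem40} to decide convergence. The only cosmetic difference is that the paper phrases the convergence/divergence dichotomy via the root test and a $\liminf$ computation, whereas you package it as the comparison $e^{-2tc_k}\norm{F}^{4k}\asymp \rho_t^k$; both arguments are equivalent, and your explicit handling of the borderline case is clean.
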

\begin{proof}
It is enough to show that $\displaystyle \sum_{k=0}^{\infty}e^{-2tc_k}\norm{F}^{4k}<\infty$ holds precisely when $t>2\sqrt{N_q^2-4}\log \norm{F}$. First of all, if $t\leq 2\sqrt{N_q^2-4}\log \norm{F}$, then 
\begin{align}
&\liminf_{k\rightarrow \infty}\log \left ( e^{-2tc_k}\norm{F}^{4k}\right )=\liminf_{k\rightarrow \infty} \left (-2tc_k+4k\log\norm{F}\right )\\
&=\liminf_{k\rightarrow \infty}2(k+1)\cdot \left (-\frac{t}{\sqrt{N_q^2-4}}+2\log\norm{F}\right )\geq 0
\end{align}
which implies $\displaystyle \liminf_{k\rightarrow \infty}e^{-2tc_k}\norm{F}^{4k}\geq 1$ and $\displaystyle \sum_{k=0}^{\infty}e^{-2tc_k}\norm{F}^{4k}=\infty$.

On the other hand, if $t>2\sqrt{N_q^2-4}\log\norm{F}$, then $\displaystyle \sum_{k=0}^{\infty}e^{-2tc_k}\norm{F}^{4k}<\infty$ follows thanks to the root test
\begin{equation}
\lim_{k\rightarrow \infty}\left ( e^{-2tc_k}\norm{F}^{4k}\right)^{\frac{1}{k+1}}=e^{-\frac{2t}{\sqrt{N_q^2-4}}}\norm{F}^4<1.
\end{equation}

\end{proof}

%\begin{remark}
%Comments on non-trivial ultracontractivity?
%\end{remark}

\subsection{Comparison between the complex interpolation and the real interpolation spaces}\label{sec:interpolation}

Associated to a compatible pair of Banach spaces $(X_0,X_1)$ are interpolation spaces $(X_0,X_1)_{\theta}$ and $(X_0,X_1)_{\theta,q}$, which are defined by the {\it complex method} and the {\it real method} respectively. For a semifinite von Neumann algebra $M$ with a normal semifinite faithful trace $\varphi$, the following  interpolation spaces $L^p(M)=(M,M_*)_{\frac{1}{p}}$ and $L^{p,p}(M)=(M,M_*)_{\frac{1}{p},p}$ have equivalent norms. However, the equivalence is no longer true in the non-tracial setting as noted in \cite[Example 3.3]{PiXu03}. Indeed, the interpolation spaces $L^p(B(\ell^2))$ and $L^{p,p}(B(\ell^2))$ do not have equivalent norms with respect to $\varphi( x)=\text{Tr}(xD)$ with any diagonal injective positive operator $D\in S^1(\ell^2)$ whose trace is $1$.

We apply Theorem \ref{thm1} to establish an analogous result for free orthogonal quantum groups $O_F^+$. More precisely, in Theorem \ref{thm4}, we prove that $L^p(O_F^+)$ and $L^{p,p}(O_F^+)$ have equivalent norms for some $1<p\neq 2<\infty$ if and only if $O_F^+$ is of Kac type, i.e. the Haar state is tracial.

Our strategy is to generalize some methods developed in \cite{Yo18a} to non-Kac $O_F^+$. Let us begin with the dual statement of Theorem \ref{thm1} with modifying the proof of \cite[Proposition 3.7]{Yo18a} for non-Kac cases. We denote by $\norm{A}_{S^2_{n}}$ the Schatten $2$-norm $\text{Tr}(A^*A)^{\frac{1}{2}}$ for $A\in M_n(\Comp)$.

\begin{proposition}\label{prop51}
Let $O_F^+$ be a free orthogonal quantum group of non-Kac type. Then we have
\begin{equation}
\sup_{k\in \n_0}\left\{ \frac{\sqrt{d_k}}{\norm{F}^{2k}}\norm{\widehat{f}(k)Q(k)^{\frac{1}{2}}}_{S^2_{n_k}} \right \}\lesssim \norm{f}_{L^1(O_F^+)}
\end{equation}
for all $f\in \text{Pol}(O_F^+)$.
\end{proposition}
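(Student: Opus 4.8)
The plan is to dualize the inequality \eqref{eq31} from Theorem \ref{thm1}. Recall that for any $g \in \text{Pol}_k(O_F^+)$ we have the bound $\norm{g}_{C_r(O_F^+)} \leq C_q' \norm{F}^{2k} \norm{g}_{L^2(O_F^+)}$, where $C_q' = \frac{C_q \norm{F}^2}{\norm{F}^2 - 1}$. I would combine this with the duality pairing $\la \iota(x), y \ra_{L^1, L^\infty} = h(yx)$ and the fact that for a homogeneous $f \in \text{Pol}_k(O_F^+)$ the projection $p_k$ onto $\text{Pol}_k$ is a norm-one contraction from $L^1$ to $L^1$ (it is the Fourier projection, given by integrating against characters, hence completely contractive on all $L^p$). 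So it suffices to estimate $\norm{p_k f}_{L^1(O_F^+)}$ from below, and then recover the Schatten-$2$ norm of $\widehat{f}(k) Q(k)^{1/2}$ by testing against a suitable homogeneous polynomial of degree $k$.

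Concretely, fix $k$ and write $f_k = p_k f$ with Fourier coefficient $A := \widehat{f}(k)$. I would pick a test element $g_k \in \text{Pol}_k(O_F^+)$ whose Fourier coefficient is essentially $Q(k)^{-1} A^* Q(k)$ (or $A^*$ up to the $Q$-weights dictated by \eqref{eq251} and the Plancherel identity \eqref{eq20}), normalized so that $\norm{g_k}_{L^2(O_F^+)} = 1$. By Schur orthogonality \eqref{eq22}–\eqref{eq20}, the pairing $h(g_k^* f_k)$ — equivalently $\la \iota(f_k), g_k^* \ra$ — evaluates to a positive multiple of $d_k \text{Tr}(A^* A Q(k)) = \norm{\sqrt{d_k}\, \widehat{f}(k) Q(k)^{1/2}}_{S^2_{n_k}}^2 / 1$, i.e. to $\sqrt{d_k}\,\norm{\widehat{f}(k) Q(k)^{1/2}}_{S^2_{n_k}} \cdot \sqrt{d_k}\,\norm{g_k}_{L^2}$ up to the normalization. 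Then
\begin{align*}
\sqrt{d_k}\,\norm{\widehat{f}(k)Q(k)^{1/2}}_{S^2_{n_k}}
&\lesssim |h(g_k^* f_k)| = |\la \iota(f_k), g_k^*\ra_{L^1,L^\infty}|\\
&\leq \norm{f_k}_{L^1(O_F^+)}\, \norm{g_k^*}_{C_r(O_F^+)}
\leq \norm{f}_{L^1(O_F^+)}\, C_q' \norm{F}^{2k}\, \norm{g_k^*}_{L^2(O_F^+)},
\end{align*}
where the last step uses \eqref{eq31} applied to $g_k^* \in \text{Pol}_k(O_F^+)$ and the contractivity of $p_k$ on $L^1$. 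Since $\norm{g_k^*}_{L^2(O_F^+)}$ is comparable to $\norm{g_k}_{L^2(O_F^+)} = 1$ (this uses only that $Q(k)$ has bounded ratio of largest to smallest eigenvalue times $d_k$-normalization — more carefully, $\norm{g^*}_{L^2}^2 = d_k \text{Tr}(\widehat{g}(k)\widehat{g}(k)^* Q(k)^{-1})$ versus $\norm{g}_{L^2}^2 = d_k\text{Tr}(\widehat{g}(k)^* \widehat{g}(k) Q(k))$, and with $\widehat{g_k}(k) \propto Q(k)^{-1}A^* Q(k)$ both sides are the same multiple of $\text{Tr}(A^*A Q(k))$), rearranging gives exactly the asserted bound with implied constant $C_q'$. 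Taking the supremum over $k$ completes the argument.

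The main obstacle I anticipate is bookkeeping the $Q(k)$-weights so that the test element $g_k$ pairs with $f_k$ to reproduce precisely $\text{Tr}(\widehat{f}(k)^* \widehat{f}(k) Q(k))$ while simultaneously having $\norm{g_k^*}_{L^2}$ controlled by a constant independent of $k$ — the non-Kac case is exactly where $\norm{g}_{L^2} \neq \norm{g^*}_{L^2}$ in general, so the choice $\widehat{g_k}(k) \propto Q(k)^{-1}\widehat{f_k}(k)^* Q(k)$ (rather than the naive $\widehat{f_k}(k)^*$) is what makes the two $L^2$-norms coincide. A secondary point to check is that $p_k$ really is an $L^1 \to L^1$ contraction on $\text{Pol}(O_F^+)$; this follows since $p_k$ is the adjoint of $p_k$ on $L^\infty$ under the pairing $h(\cdot\,\cdot)$, and $p_k$ is unital-completely-positive-averaging on $L^\infty$, hence a complete contraction, so its preadjoint is a complete contraction on $L^1$. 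Everything else is routine manipulation of the Plancherel identity \eqref{eq20} and Theorem \ref{thm1}.
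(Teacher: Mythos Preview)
Your overall strategy --- dualize Theorem~\ref{thm1} by testing $f$ against a homogeneous polynomial of degree $k$ --- is exactly the paper's, but the execution introduces two detours, one of which contains a genuine error.

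First, you do not need to project $f$ to $f_k$: by Schur orthogonality $h(g_k^*f)=h(g_k^*f_k)$ automatically when $g_k\in\text{Pol}_k(O_F^+)$, so the $L^1$--contractivity of $p_k$ is irrelevant. It is also false: for a non-abelian compact (quantum) group the isotypical projection is convolution by $d_k\overline{\chi_k}$, which is not an $L^1$--contraction once $d_k>1$, and it is certainly not a unital completely positive map.

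Second, your formula $\norm{g^*}_{L^2}^2=d_k\Tr\bigl(\widehat{g}(k)\widehat{g}(k)^*Q(k)^{-1}\bigr)$ is incorrect in the paper's conventions; a direct computation from the second Schur relation gives $\norm{g^*}_{L^2}^2=d_k\Tr\bigl(Q(k)\,\widehat{g}(k)\,Q(k)^2\,\widehat{g}(k)^*\bigr)$. With your choice $\widehat{g_k}(k)\propto Q(k)^{-1}A^*Q(k)$ neither $\norm{g_k}_2^2$, $\norm{g_k^*}_2^2$, nor the pairing $h(g_k^*f)$ reduce to multiples of $\Tr(A^*AQ(k))$, so the bookkeeping you flag as the ``main obstacle'' does not close as written.

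Both issues evaporate once you use the $*$--invariance of the $C^*$--norm instead of applying Theorem~\ref{thm1} to $g_k^*$. For any $x\in\text{Pol}_k(O_F^+)$ with $\norm{x}_{C_r}\le 1$ one has
\[
\norm{f}_{L^1}\ \ge\ |h(x^*f)|\ =\ d_k\bigl|\Tr\bigl(\widehat{f}(k)Q(k)\widehat{x}(k)^*\bigr)\bigr|,
\]
and Theorem~\ref{thm1} says the $C_r$--unit ball of $\text{Pol}_k$ contains every $x$ with $C_q'\norm{F}^{2k}\sqrt{d_k}\,\norm{\widehat{x}(k)Q(k)^{1/2}}_{S^2}\le 1$. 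Optimizing over this set (take $\widehat{x}(k)\propto\widehat{f}(k)$, not $Q^{-1}\widehat{f}(k)^*Q$) gives
\[
\norm{f}_{L^1}\ \ge\ \frac{\sqrt{d_k}}{C_q'\norm{F}^{2k}}\,\norm{\widehat{f}(k)Q(k)^{1/2}}_{S^2_{n_k}}
\]
directly --- no projection, no $\norm{g_k^*}_2$, no special test element. This is precisely the paper's proof.
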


\begin{proof}
Recall that $\norm{f}_{L^1(O_F^+)}=\displaystyle \sup \left \{ h(x^*f): x\in \text{Pol}(O_F^+),~ \norm{x}_{C_r(O_F^+)}\leq 1\right\}$. By Theorem \ref{thm1} for any $k\in \n_0$ we have
\begin{align}
\norm{f}_{L^1(O_F^+)}\geq \sup h(x^*f) =\sup \left \{ d_k\text{Tr}(\widehat{f}(k)Q(k)\widehat{x}(k)^*)\right\}
\end{align}
where the supremum runs over all $x\in \text{Pol}_k(O_F^+)$ such that 
\begin{equation}
\frac{C_q\norm{F}^{2k+2}}{\norm{F}^2-1}\cdot \sqrt{d_k}\norm{\widehat{x}(k)Q(k)^{\frac{1}{2}}}_{S^2_{n_k}}\leq 1.
\end{equation}
Thus for any $k\in \n_0$ we obtain
\begin{equation}
\norm{f}_{L^1(O_F^+)}\geq \frac{\norm{F}^2-1}{C_q\norm{F}^{2k+2}}\cdot \sqrt{d_k}\norm{\widehat{f}(k)Q(k)^{\frac{1}{2}}}_{S^2_{n_k}},
\end{equation}
which is the desired conclusion.
\end{proof}

Now let us apply a generalized Marcinkiewicz theorem to interpolate Proposition \ref{prop51} and the Plancherel identity with adapting the proof of \cite[Theorem 3.8]{Yo18a} for non-Kac $O_F^+$.

\begin{theorem}\label{thm3}
Let $O_F^+$ be a free orthogonal quantum group of non-Kac type. Then for any $1<p\leq 2$ we have
\begin{equation}\label{eq42}
\left ( \sum_{k\geq 0}\frac{d_k^{\frac{p}{2}}}{\norm{F}^{2k(2-p)}}\norm{\widehat{f}(k)Q(k)^{\frac{1}{2}}}_{S^2_{n_k}}^p \right )^{\frac{1}{p}}\lesssim \norm{f}_{L^{p,p}(O_F^+)}
\end{equation}
for all $f\displaystyle \in \text{Pol}(O_F^+)$ whose Fourier coefficients are $\widehat{f}(k)$.
\end{theorem}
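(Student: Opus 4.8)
The plan is to obtain \eqref{eq42} by \textbf{real interpolation} between two endpoint estimates:
the $L^\infty$-endpoint coming from Plancherel and the $L^1$-endpoint from Proposition \ref{prop51}. Concretely, consider the linear map $T:f\mapsto (\widehat f(k)Q(k)^{1/2})_{k\in\n_0}$, viewed as a map into a weighted $\ell^2$-valued sequence space built from the Schatten spaces $S^2_{n_k}$. The Plancherel identity \eqref{eq20} says exactly that $T$ is an isometry from $L^2(O_F^+)$ onto the space $\ell^2\big((d_k^{1/2})_k;\,S^2_{n_k}\big)$, i.e. $\big(\sum_k d_k\norm{\widehat f(k)Q(k)^{1/2}}_{S^2_{n_k}}^2\big)^{1/2}=\norm f_{L^2(O_F^+)}$. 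Proposition \ref{prop51} says $T$ is bounded from $L^1(O_F^+)$ into $\ell^\infty\big((\sqrt{d_k}/\norm F^{2k})_k;\,S^2_{n_k}\big)$. Interpolating these two by the real method with parameter $\theta=2/p'-1 = 2-2/p$ — so that $\tfrac1p = \tfrac{1-\theta}{2}+\theta\cdot 1$ with $\theta\in(0,1)$ when $1<p<2$, actually one sets $\theta$ so that $2$ and $1$ produce $p$ on the target side — should yield \eqref{eq42} on the target side, while on the domain side one gets $(L^{\infty},L^1)_{\theta,p}=L^{p,p}(O_F^+)$ by definition of the Lorentz space.

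The key steps, in order: (1) Set up the weighted sequence spaces carefully; the target at the $L^2$ end is $\ell^2$ with weight $d_k^{1/2}$ on each block $S^2_{n_k}$, and at the $L^1$ end it is $\ell^\infty$ with weight $\sqrt{d_k}\norm F^{-2k}$. (2) Compute the real interpolation space $\big(\ell^\infty(w_0;S^2_{n_k}),\,\ell^2(w_1;S^2_{n_k})\big)_{\theta,p}$: weighted $\ell^q$-sequence spaces interpolate to weighted Lorentz sequence spaces, and because the target exponent $p$ equals the fine index, one lands in the weighted $\ell^p$ space with the interpolated weight $w_0^{1-\theta}w_1^{\theta}$. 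Choosing $\theta$ to make the outer exponent equal to $p$, the weight becomes $d_k^{p/2}\norm F^{-2k(2-p)}$ after raising to the $p$-th power, which is exactly the weight in \eqref{eq42}. (3) Identify the domain interpolation space: by definition $L^{p,p}(O_F^+)=(L^\infty(O_F^+),L^1(O_F^+))_{1/p,p}$, and a direct check of $\theta$ against $1/p$ shows they match (or one uses the standard reindexing $(X_0,X_1)_{\theta,q}=(X_1,X_0)_{1-\theta,q}$). (4) Invoke a generalized Marcinkiewicz interpolation theorem (real interpolation of a single linear operator between compatible couples) to transfer the two endpoint bounds to the bound \eqref{eq42}; this is the step flagged in the paragraph preceding the statement. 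One should cite the relevant form from \cite{BeLo76,BeSh88} or follow \cite[Theorem 3.8]{Yo18a}.

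The main obstacle is the \emph{bookkeeping of the weights through real interpolation}, together with making sure the couple $(L^\infty(O_F^+),L^1(O_F^+))$ is used in a manner consistent with the embedding $\iota$ described in the preliminaries, so that the real interpolation scale genuinely reproduces $L^{p,p}(O_F^+)$. The Schatten blocks $S^2_{n_k}$ are finite-dimensional and harmless, but one must confirm that the interpolation of a direct sum of weighted Hilbert spaces with disparate weights $w_0,w_1$ behaves as a single weighted Lorentz sum — this is where a generalized Marcinkiewicz/weighted-sequence-space lemma does the heavy lifting. I would borrow the precise statement and computation from \cite[Theorem 3.8]{Yo18a}, checking only that the non-Kac modification (the factor $\norm F^{2k}$ replacing the trivial weight and $Q(k)^{1/2}$ entering the Schatten norm) does not interfere with the argument; since these enter purely as scalar weights on the blocks, the adaptation is routine once the setup is fixed. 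Everything else — the Plancherel endpoint, the $L^1$ endpoint — is already available as \eqref{eq20} and Proposition \ref{prop51}.
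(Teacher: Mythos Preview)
Your overall plan—real-interpolate Proposition \ref{prop51} against Plancherel—is the paper's strategy, but your setup of the \emph{target} couple is where it breaks. You read Proposition \ref{prop51} as a strong-type bound $T:L^1(O_F^+)\to\ell^\infty(w_0;S^2_{n_k})$ with $w_0(k)=\sqrt{d_k}\,\norm F^{-2k}$, and pair it with the Plancherel isometry $T:L^2(O_F^+)\to\ell^2(w_1;S^2_{n_k})$. The obstruction is that real interpolation of a couple with exponents $\infty$ and $2$ only produces intermediate exponents in $[2,\infty]$: if $\theta$ is chosen so that the domain $(L^2,L^1)_{\theta,p}$ has main index $p$ (namely $\theta=(2-p)/p$), then the target main index is $\tfrac{2}{1-\theta}=p'$, not $p$. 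Thus $(\ell^2(w_1),\ell^\infty(w_0))_{\theta,p}$ is a Lorentz-type space with main index $p'\ge 2$, and you never reach the $\ell^p$ space on the left of \eqref{eq42} for $1<p<2$. Your sentence ``because the target exponent $p$ equals the fine index'' is exactly the false step; no version of Marcinkiewicz for the pair $(1,\infty)$--$(2,2)$ can repair this.

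The paper's fix (already in \cite[Theorem 3.8]{Yo18a}, so your instinct to look there is right—but the mechanism is not what you describe) is to recast the $L^1$-endpoint as a \emph{weak type $(1,1)$} bound with respect to a single measure. One absorbs the weights into the operator, setting $(Tf)(k)=\norm F^{2k}\sqrt{d_k}\,\widehat f(k)Q(k)^{1/2}$, and equips $\n_0$ with $\nu(k)=\norm F^{-4k}$. Plancherel becomes $T:L^2\to\ell^2(\nu)$ isometric, while Proposition \ref{prop51} gives $\norm{(Tf)(k)}_{S^2_{n_k}}\lesssim\norm F^{4k}\norm f_{L^1}$; since the level sets $\{k:\norm F^{4k}>y\}$ are tails and $\nu$ is geometric, this yields $\nu\{k:\norm{(Tf)(k)}>y\}\lesssim\norm f_{L^1}/y$, i.e.\ $T:L^1\to\ell^{1,\infty}(\nu)$. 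Now both target spaces carry the \emph{same} measure and have exponents $2$ and $1$, so Marcinkiewicz at $(\theta,p)=((2-p)/p,\,p)$ gives $T:(L^2,L^1)_{\theta,p}\to\ell^p(\nu)$; reiteration identifies the domain with $L^{p,p}(O_F^+)$, and unfolding $T$ and $\nu$ recovers \eqref{eq42}. The conversion of the weighted-$\ell^\infty$ endpoint into a weak-$\ell^1(\nu)$ endpoint via the geometric decay of $\norm F^{-4k}$ is the missing idea.
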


\begin{proof}

Let us define a linear map $\displaystyle T:\text{Pol}(O_F^+)\rightarrow \bigoplus_{k\geq 0}M_{n_k}(\Comp)$  given by
\begin{equation}
(Tf)(k)= \norm{F}^{2k}\sqrt{d_k}\widehat{f}(k)Q(k)^{\frac{1}{2}}.
\end{equation}
First of all, the Plancherel theorem tells us that $T$ extends to an isometry $\displaystyle T:L^2(O_F^+)\rightarrow \ell^2((S^2_{n_k})_{k\geq 0},\nu)$  with respect to  the measure $\nu(k)=\norm{F}^{-4k}$. Moreover, $T$ is of weak type (1,1) thanks to Proposition \ref{prop51}. Indeed, we have a universal constant $C>0$ such that
\begin{align}
\sum_{k: \norm{(Tf)(k)}_{S^2_{n_k}}>y} \nu(k) &\leq \sum_{k: C\norm{F}^{4k}\norm{f}_{L^1(O_F^+)}>y} \nu(k),
\end{align}
for any $y>0$. Note that the right hand side is $0$ if $f=0$. For $f\neq 0$ cases, let $k_0$ be the smallest natural number such that $C\norm{f}_{L^1(O_F^+)}>y\norm{F}^{-4k_0}$. Then the right hand side is given by 
\begin{equation}
\sum_{k=k_0}^{\infty} \norm{F}^{-4k}= \frac{\norm{F}^{-4k_0}}{1-\norm{F}^{-4}}<\frac{C\norm{f}_{L^1(O_F^+)}}{\left (1-\norm{F}^{-4}\right )y}\lesssim  \frac{\norm{f}_{L^1(O_F^+)}}{y}.
\end{equation}

Thus, applying \cite[Theorem 1.12]{BeSh88}, we obtain that
\begin{equation}
T:(L^2(O_F^+),L^1(O_F^+))_{\frac{2-p}{p},p}\rightarrow (\ell^2((S^2_{n_k})_{k\geq 0},\nu),\ell^{1,\infty}((S^2_{n_k})_{k\geq 0},\nu))_{\frac{2-p}{p},p}
\end{equation}
is bounded. Note that
\begin{equation}
(\ell^2((S^2_{n_k})_{k\geq 0},\nu),\ell^{1,\infty}((S^2_{n_k})_{k\geq 0},\nu))_{\frac{2-p}{p},p}=\ell^p((S^2_{n_k})_{k\geq 0},\nu)
\end{equation}
by  \cite[Theorem 5.3.2]{BeLo76} and we have 
\begin{align}
&(L^2(O_F^+),L^1(O_F^+))_{\frac{2-p}{p},p}\\
&=((L^{\infty}(O_F^+),L^1(O_F^+))_{\frac{1}{2}},(L^{\infty}(O_F^+),L^1(O_F^+))_1)_{\frac{2-p}{p},p}\\
&=(L^{\infty}(O_F^+),L^1(O_F^+))_{\frac{1}{p},p}=L^{p,p}(O_F^+).
\end{align}

Hence, $T:L^{p,p}(O_F^+)\rightarrow \ell^p((S^2_{n_k})_{k\geq 0},\nu)$ is bounded and the resulting inequality is \eqref{eq42}.

\end{proof}

Lastly, by applying Lemma \ref{lem30} and Theorem \ref{thm3}, we obtain the following discrimination of the interpolation spaces.

\begin{theorem}\label{thm4}
Let $O_F^+$ be a free orthogonal quantum group with a canonical $F$-matrix. Then $L^p(O_F^+)$ and $L^{p,p}(O_F^+)$ have equivalent norms for some $1<p\neq 2<\infty$ if and only if $O_F^+$ is of Kac type.
\end{theorem}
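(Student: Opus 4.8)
\textbf{Proof proposal for Theorem \ref{thm4}.}

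The plan is to establish the two directions separately. The ``if'' direction is classical: when $O_F^+$ is of Kac type, the Haar state $h$ is a trace, so $L^{\infty}(O_F^+)$ is a (finite) semifinite von Neumann algebra with a normal faithful tracial state, and the general fact recalled in Section \ref{sec:pre} (from \cite{PiXu03,BeLo76}) gives $L^p(O_F^+)=(L^{\infty}(O_F^+),L^1(O_F^+))_{1/p}=(L^{\infty}(O_F^+),L^1(O_F^+))_{1/p,p}=L^{p,p}(O_F^+)$ with equivalent norms for every $1<p<\infty$. So the whole content is the ``only if'' direction, which I would prove by contraposition: assuming $O_F^+$ is \emph{not} of Kac type, I will exhibit, for each fixed $1<p\neq 2<\infty$, a sequence of polynomials witnessing that the $L^{p,p}$-norm is not dominated by a constant multiple of the $L^p$-norm.

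The test vectors are the analytic monomials $(u_{11})^k$ (or rather their adjoints) built from the canonical $F$-matrix, for which Lemma \ref{lem30} gives an exact $L^2$-norm and Fourier coefficient, and Theorem \ref{thm31} gives the sharp $C_r$-norm of analytic polynomials in the single generator $u_{11}$; Example \ref{ex31}(4) moreover identifies the $L^p$-norm of such monomials up to a constant depending only on $F$. Concretely, for $f=\sum_{k=0}^n b_k\lambda_1^{-k}\sqrt{d_k}\,(u_{11}^*)^k$ one has $\norm{f}_{L^2}^2\approx\sum_k b_k^2$ while, by Example \ref{ex31}(4) (or directly \eqref{eq301}), $\norm{(u_{11}^*)^k}_{L^p}\approx d_k^{-1/2}\lambda_1^{-4k/p+2k}$ up to constants, so one can compute $\norm{f}_{L^p}$ up to constants as a weighted $\ell^2$-type expression in the $b_k$; on the other hand Theorem \ref{thm3} (with $\norm{F}=\lambda_1^{-1}$) bounds a weighted $\ell^p$-sum of the $b_k$ by $\norm{f}_{L^{p,p}}$. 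For $p<2$ one chooses the $b_k$ so that the $\ell^p$-weighted sum (a lower bound for the $L^{p,p}$-norm via Theorem \ref{thm3}) blows up relative to the $L^p$-norm; the point is that when $F$ is not Kac one has $\lambda_1<1$, i.e. $\norm{F}>1$, and the weights $\norm{F}^{2k(2-p)}$ appearing in \eqref{eq42} are genuinely exponential in $k$ — a phenomenon absent in the Kac case, where all such weights are $1$ and the two norms collapse. For $p>2$ one passes to the dual pair: $L^{p,p}(O_F^+)^*=L^{p',p'}(O_F^+)$ and $L^p(O_F^+)^*=L^{p'}(O_F^+)$ with $p'<2$, so equivalence of $L^p$ and $L^{p,p}$ for some $p>2$ would force equivalence of $L^{p'}$ and $L^{p',p'}$ for the conjugate $p'\in(1,2)$, which we have just excluded.

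The main obstacle I expect is the bookkeeping that turns Theorem \ref{thm3} and Example \ref{ex31}(4) into a clean quantitative separation: one must track how the three exponential rates $d_k\approx r_q^k$, $\norm{F}^{2k}=\lambda_1^{-2k}$, and the $p$-dependent powers $\lambda_1^{\pm 4k/p}$ combine, and verify that the resulting weight in the $L^{p,p}$-lower-bound strictly dominates the weight in the $L^p$-norm whenever $\lambda_1\neq 1$. A convenient way to make the contradiction sharp is to take $b_k$ supported on $\{0,1,\dots,n\}$ and let $n\to\infty$, or even to take a single well-chosen $b_k$ sequence in $\ell^2$ and compare $\|\cdot\|_{L^p}$ (a convergent weighted sum) against the weighted $\ell^p$ expression from \eqref{eq42} (divergent), exactly as in the $B(\ell^2)$ example of \cite{PiXu03} and in the Kac-type argument of \cite{Yo18a}; the only genuinely new input is that the weight $\norm{F}^{2k(2-p)}$ from \eqref{eq42} is now nontrivial precisely because $O_F^+$ is non-Kac, so this weight is exactly the obstruction that makes the two interpolation scales inequivalent. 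Finally I would remark that one should double-check the endpoint cases and the constants depend only on $F$ (and on $p$), which is already guaranteed by the corresponding clauses in Lemma \ref{lem30}, Example \ref{ex31}, and Theorem \ref{thm3}.
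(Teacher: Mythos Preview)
Your proposal is correct and follows essentially the same route as the paper: test functions built from powers of $u_{11}$, Theorem~\ref{thm3} for the $L^{p,p}$ lower bound, the $L^p$ estimate from Example~\ref{ex31}(4) (equivalently \cite[Lemma~3.4]{Wa17}) for the upper bound, and duality for $p>2$. One correction to your heuristic: when you carry out the bookkeeping you will find that the exponential weights on the two sides do \emph{not} separate but rather cancel exactly, and with the paper's normalization $f=\sum_k x_k\sqrt{d_k}\,\norm{F}^{(4-p)k/p}(u_{11})^k$ the comparison reduces cleanly to $(\sum|x_k|^p)^{1/p}\lesssim(\sum|x_k|^2)^{1/2}$, i.e.\ the boundedness of $\ell^2\hookrightarrow\ell^p$, which fails for $p<2$; the non-Kac hypothesis enters not through a weight mismatch but through the absence of the polynomial factor $(k+1)$ in Theorem~\ref{thm1} and hence in Theorem~\ref{thm3}.
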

\begin{proof}
Let us focus on the only if part since the other direction is automatic in the tracial setting. If we assume that the interpolation spaces $L^{p,p}(O_F^+)=(L^{\infty}(O_F^+),L^1(O_F^+))_{\frac{1}{p},p}$ and $L^p(O_F^+)=(L^{\infty}(O_F^+),L^1(O_F^+))_{\frac{1}{p}}$ have equivalent norms for some $1<p<2$. Then we should have
\begin{equation}\label{eq51}
\left ( \sum_{k\geq 0}\frac{d_k^{\frac{p}{2}}}{\norm{F}^{2k(2-p)}}\norm{\widehat{f}(k)Q(k)^{\frac{1}{2}}}_{S^2_{n_k}}^p \right )^{\frac{1}{p}}\lesssim \norm{f}_{L^p(O_F^+)}
\end{equation}
for all $f\in L^p(O_F^+)$ by Theorem \ref{thm3}. To show that \eqref{eq51} leads us to a contraction, let us take 
\begin{equation} 
f=\sum_{k=0}^{n}x_k \sqrt{d_k}\norm{F}^{\frac{(4-p)k}{p}} (u_{11})^k\in \text{Pol}(O_F^+)
\end{equation} 
whose Fourier coefficients are given by 
\begin{equation}
\widehat{f}(k)=\displaystyle x_kd_k^{-\frac{1}{2}}\norm{F}^{\frac{(4-3p)k}{p}}\xi_k \xi_k^*\in B(H_k)\subseteq B(H_1^{\otimes k})
\end{equation} 
where $\xi_k=e_1\otimes e_1\otimes \cdots \otimes e_1\in H_k\subseteq H_1^{\otimes k}$. In this case, the left hand side of \eqref{eq51} is given by
\begin{equation}
\left ( \sum_{k=0}^n |x_k|^p\norm{F}^{-kp} \text{Tr}(\xi_k\xi_k^*Q(k))^{\frac{p}{2}} \right )^{\frac{1}{p}}=\left ( \sum_{k=0}^n |x_k|^p  \right )^{\frac{1}{p}}
\end{equation}
and the right hand side of \eqref{eq51} is estimated by 
\begin{align}
\label{eq52}&\norm{f}_{L^p(O_F^+)}=\norm{\sum_{k=0}^{n}x_k \sqrt{d_k}\norm{F}^{\frac{(4-p)k}{p}} \lambda_1^{\frac{4k}{p}}(u_{11}^*)^k}_{L^p(O_F^+)}\\
&\leq \norm{\sum_{k=0}^{n}x_k \sqrt{d_k}\norm{F}^{-k} (u_{11}^*)^k}_{L^2(O_F^+)}=\left ( \sum_{k=0}^n |x_k|^2 \right )^{\frac{1}{2}}.
\end{align}
Here the equality in \eqref{eq52} is thanks to \cite[Lemma 3.4 (c)]{Wa17}. Combining all the above arguments, we can see that the formal identity from $\ell^2(\n_0)$ into $\ell^p(\n_0)$ is a bounded map, which is a contradiction for any $1<p<2$. The conclusion for the cases $2<p<\infty$ follows from the standard duality arguments \cite[Section 1.21]{Ca64}, \cite[Theorem 3.7.1]{BeLo76}.
\end{proof}

%Consider a weight function $w$ - then $w\in \ell^{\frac{2p}{2-p}}$. On the other hand  

\bibliographystyle{alpha}
\bibliography{Youn}

\newcommand{\etalchar}[1]{$^{#1}$}
\def\cprime{$'$}
\begin{thebibliography}{BDRV06}

\bibitem[Ban96]{Ba96}
Teodor Banica.
\newblock Th\'eorie des repr\'esentations du groupe quantique compact libre
  {${\rm O}(n)$}.
\newblock {\em C. R. Acad. Sci. Paris S\'er. I Math.}, 322(3):241--244, 1996.

\bibitem[BC18]{BrCo18}
Michael Brannan and Beno\^\i~t Collins.
\newblock Highly entangled, non-random subspaces of tensor products from
  quantum groups.
\newblock {\em Comm. Math. Phys.}, 358(3):1007--1025, 2018.

\bibitem[BCLY20]{BCLY20}
Michael Brannan, Beno\^{\i}t Collins, Hun~Hee Lee, and Sang-Gyun Youn.
\newblock Temperley-{L}ieb quantum channels.
\newblock {\em Comm. Math. Phys.}, 376(2):795--839, 2020.

\bibitem[BDRV06]{BiDeVa06}
Julien Bichon, An~De~Rijdt, and Stefaan Vaes.
\newblock Ergodic coactions with large multiplicity and monoidal equivalence of
  quantum groups.
\newblock {\em Comm. Math. Phys.}, 262(3):703--728, 2006.

\bibitem[BK16]{BrKi16}
Michael Brannan and Kay Kirkpatrick.
\newblock Quantum groups and generalized circular elements.
\newblock {\em Pacific J. Math.}, 282(1):35--61, 2016.

\bibitem[BL76]{BeLo76}
J\"oran Bergh and J\"orgen L\"ofstr\"om.
\newblock {\em Interpolation spaces. {A}n introduction}.
\newblock Springer-Verlag, Berlin-New York, 1976.
\newblock Grundlehren der Mathematischen Wissenschaften, No. 223.

\bibitem[Bra12a]{Br12}
Michael Brannan.
\newblock Approximation properties for free orthogonal and free unitary quantum
  groups.
\newblock {\em J. Reine Angew. Math.}, 672:223--251, 2012.

\bibitem[Bra12b]{Br12c}
Michael Brannan.
\newblock Quantum symmetries and strong {H}aagerup inequalities.
\newblock {\em Comm. Math. Phys.}, 311(1):21--53, 2012.

\bibitem[Bra14]{Br14}
Michael Brannan.
\newblock Strong asymptotic freeness for free orthogonal quantum groups.
\newblock {\em Canad. Math. Bull.}, 57(4):708--720, 2014.

\bibitem[BS88]{BeSh88}
Colin Bennett and Robert Sharpley.
\newblock {\em Interpolation of operators}, volume 129 of {\em Pure and Applied
  Mathematics}.
\newblock Academic Press, Inc., Boston, MA, 1988.

\bibitem[BVY21]{BVY21}
Michael Brannan, Roland Vergnioux, and Sang-Gyun Youn.
\newblock Property {RD} and hypercontractivity for orthogonal free quantum
  groups.
\newblock {\em Int. Math. Res. Not. IMRN}, (2):1573--1601, 2021.

\bibitem[BVZ15]{BVZ15}
Jyotishman Bhowmick, Christian Voigt, and Joachim Zacharias.
\newblock Compact quantum metric spaces from quantum groups of rapid decay.
\newblock {\em J. Noncommut. Geom.}, 9(4):1175--1200, 2015.

\bibitem[Cal64]{Ca64}
A.-P. Calder\'{o}n.
\newblock Intermediate spaces and interpolation, the complex method.
\newblock {\em Studia Math.}, 24:113--190, 1964.

\bibitem[Cha17]{Ch17}
Indira Chatterji.
\newblock Introduction to the rapid decay property.
\newblock In {\em Around {L}anglands correspondences}, volume 691 of {\em
  Contemp. Math.}, pages 53--72. Amer. Math. Soc., Providence, RI, 2017.

\bibitem[CM90]{CoMo90}
Alain Connes and Henri Moscovici.
\newblock Cyclic cohomology, the {N}ovikov conjecture and hyperbolic groups.
\newblock {\em Topology}, 29(3):345--388, 1990.

\bibitem[Con94]{Co94}
Alain Connes.
\newblock {\em Noncommutative geometry}.
\newblock Academic Press, Inc., San Diego, CA, 1994.

\bibitem[CR05]{Ch05}
I.~Chatterji and K.~Ruane.
\newblock Some geometric groups with rapid decay.
\newblock {\em Geom. Funct. Anal.}, 15(2):311--339, 2005.

\bibitem[CY19]{CoYo19}
Beno{\^\i}t Collins and Sang-Gyun Youn.
\newblock Additivity violation of the regularized minimum output entropy.
\newblock {\em arXiv preprint arXiv:1907.07856}, 2019.

\bibitem[dlH88]{Ha88}
Pierre de~la Harpe.
\newblock Groupes hyperboliques, alg\`ebres d'op\'erateurs et un th\'eor\`eme
  de {J}olissaint.
\newblock {\em C. R. Acad. Sci. Paris S\'er. I Math.}, 307(14):771--774, 1988.

\bibitem[dlS09]{dlSa09}
Mikael de~la Salle.
\newblock Strong {H}aagerup inequalities with operator coefficients.
\newblock {\em J. Funct. Anal.}, 257(12):3968--4002, 2009.

\bibitem[FHL{\etalchar{+}}17]{FHLUZ17}
Uwe Franz, Guixiang Hong, Fran\c{c}ois Lemeux, Micha\"el Ulrich, and Haonan
  Zhang.
\newblock Hypercontractivity of heat semigroups on free quantum groups.
\newblock {\em J. Operator Theory}, 77(1):61--76, 2017.

\bibitem[Haa79]{Ha79}
Uffe Haagerup.
\newblock {$L^{p}$}-spaces associated with an arbitrary von {N}eumann algebra.
\newblock In {\em Alg\`ebres d'op\'erateurs et leurs applications en physique
  math\'ematique ({P}roc. {C}olloq., {M}arseille, 1977)}, volume 274 of {\em
  Colloq. Internat. CNRS}, pages 175--184. CNRS, Paris, 1979.

\bibitem[Jol90]{Jo90}
Paul Jolissaint.
\newblock Rapidly decreasing functions in reduced {$C^*$}-algebras of groups.
\newblock {\em Trans. Amer. Math. Soc.}, 317(1):167--196, 1990.

\bibitem[KS07]{KeSp07}
Todd Kemp and Roland Speicher.
\newblock Strong {H}aagerup inequalities for free {$\mathcal{R}$}-diagonal
  elements.
\newblock {\em J. Funct. Anal.}, 251(1):141--173, 2007.

\bibitem[KV00]{KuVa00}
Johan Kustermans and Stefaan Vaes.
\newblock Locally compact quantum groups.
\newblock {\em Ann. Sci. \'Ecole Norm. Sup. (4)}, 33(6):837--934, 2000.

\bibitem[KV03]{KuVa03}
Johan Kustermans and Stefaan Vaes.
\newblock Locally compact quantum groups in the von {N}eumann algebraic
  setting.
\newblock {\em Math. Scand.}, 92(1):68--92, 2003.

\bibitem[Laf00]{La00}
Vincent Lafforgue.
\newblock A proof of property ({RD}) for cocompact lattices of {${\rm
  SL}(3,\bold R)$} and {${\rm SL}(3,\bold C)$}.
\newblock {\em J. Lie Theory}, 10(2):255--267, 2000.

\bibitem[Laf02]{La02}
Vincent Lafforgue.
\newblock {$K$}-th\'{e}orie bivariante pour les alg\`ebres de {B}anach et
  conjecture de {B}aum-{C}onnes.
\newblock {\em Invent. Math.}, 149(1):1--95, 2002.

\bibitem[Pis96]{Pi96}
Gilles Pisier.
\newblock The operator {H}ilbert space {${\rm OH}$}, complex interpolation and
  tensor norms.
\newblock {\em Mem. Amer. Math. Soc.}, 122(585):viii+103, 1996.

\bibitem[PX03]{PiXu03}
Gilles Pisier and Quanhua Xu.
\newblock Non-commutative {$L^p$}-spaces.
\newblock In {\em Handbook of the geometry of {B}anach spaces, {V}ol.\ 2},
  pages 1459--1517. North-Holland, Amsterdam, 2003.

\bibitem[Tim08]{Ti08}
Thomas Timmermann.
\newblock {\em An invitation to quantum groups and duality}.
\newblock EMS Textbooks in Mathematics. European Mathematical Society (EMS),
  Z\"urich, 2008.
\newblock From Hopf algebras to multiplicative unitaries and beyond.

\bibitem[VDW96]{VaWa96}
Alfons Van~Daele and Shuzhou Wang.
\newblock Universal quantum groups.
\newblock {\em Internat. J. Math.}, 7(2):255--263, 1996.

\bibitem[Ver07]{Ve07}
Roland Vergnioux.
\newblock The property of rapid decay for discrete quantum groups.
\newblock {\em J. Operator Theory}, 57(2):303--324, 2007.

\bibitem[Ver12]{Ve12}
Roland Vergnioux.
\newblock Paths in quantum {C}ayley trees and {$L^2$}-cohomology.
\newblock {\em Adv. Math.}, 229(5):2686--2711, 2012.

\bibitem[VV07]{VaVe07}
Stefaan Vaes and Roland Vergnioux.
\newblock The boundary of universal discrete quantum groups, exactness, and
  factoriality.
\newblock {\em Duke Math. J.}, 140(1):35--84, 2007.

\bibitem[Wan95]{Wa95}
Shuzhou Wang.
\newblock Free products of compact quantum groups.
\newblock {\em Comm. Math. Phys.}, 167(3):671--692, 1995.

\bibitem[Wan17]{Wa17}
Simeng Wang.
\newblock Lacunary {F}ourier series for compact quantum groups.
\newblock {\em Comm. Math. Phys.}, 349(3):895--945, 2017.

\bibitem[Wor87a]{Wo87b}
Stanis\l aw~L. Woronowicz.
\newblock Compact matrix pseudogroups.
\newblock {\em Comm. Math. Phys.}, 111(4):613--665, 1987.

\bibitem[Wor87b]{Wo87a}
Stanis\l aw~L. Woronowicz.
\newblock Twisted {${\rm SU}(2)$} group. {A}n example of a noncommutative
  differential calculus.
\newblock {\em Publ. Res. Inst. Math. Sci.}, 23(1):117--181, 1987.

\bibitem[Xu96]{Xu96}
Quanhua Xu.
\newblock Interpolation of operator spaces.
\newblock {\em J. Funct. Anal.}, 139(2):500--539, 1996.

\bibitem[You18]{Yo18a}
Sang-Gyun Youn.
\newblock Hardy-{L}ittlewood inequalities on compact quantum groups of {K}ac
  type.
\newblock {\em Anal. PDE}, 11(1):237--261, 2018.

\end{thebibliography}

\end{document}